\newtheorem{thm}{Theorem}[section]
\theoremstyle{plain}
\newtheorem{lem}[thm]{Lemma}
\newtheorem{prop}[thm]{Proposition}
\newtheorem{cor}[thm]{Corollary}
\theoremstyle{definition}
\newtheorem{example}[thm]{Example}
\theoremstyle{remark}
\newtheorem{rem}[thm]{Remark}
\newtheorem*{thma}{{\bf Theorem A}}
\newtheorem*{thmb}{{\bf Theorem B}}
\newtheorem*{thmc}{{\bf Theorem C}}
\definecolor{A}{rgb}{.75,1,.75}
\numberwithin{equation}{section}
\newcommand{\ds}{\displaystyle}
\newcommand{\C}{\mathbb C}
\newcommand{\Z}{\mathbb Z}
\newcommand{\N}{\mathbb N}
\newcommand{\Cl}{\mathcal{C}_n}
\newcommand{\HC}{\mathcal{H}_n}
\newcommand{\la}{\lambda}
\newcommand{\spin}{\C S_n^-}
\begin{document}

\title[Spin invariant theory for the symmetric group]{Spin invariant theory for the symmetric group}
\author[Wan and Wang]{Jinkui Wan and Weiqiang Wang}
\address{
Department of Mathematics,
Beijing Institute of Technology,
Beijing, 100081, P.R. China. }
\email{wjk302@gmail.com}

\address{Department of Mathematics, University of Virginia,
Charlottesville,VA 22904, USA.}
\email{ww9c@virginia.edu}
\begin{abstract}
We formulate a theory of invariants for the spin symmetric group
in some suitable modules which involve the polynomial and exterior
algebras. We solve the corresponding graded multiplicity problem
in terms of specializations of the Schur $Q$-functions and a
shifted $q$-hook formula. In addition, we provide a bijective
proof for a formula of the principal specialization of the Schur
$Q$-functions.
\end{abstract}
\subjclass[2000]{Primary: 20C30, 20C25. Secondary: 05A19, 05E05}
\keywords{Spin representations, symmetric groups, Schur $Q$-functions, shifted tableaux,
Hecke-Clifford algebra}

\maketitle


\section{Introduction}

\subsection{}
\label{subsec:Sn}

The symmetric group $S_n$ acts on $V=\C^n$ and then on the
symmetric algebra $S^*V$ naturally. It is well known that the
algebra of $S_n$-invariants on $S^*V$ is a polynomial algebra in
$n$ generators of degree $1,2, \ldots, n$. More generally,
consider the graded multiplicity of a given Specht module $S^\la$
for a partition $\la=(\la_1, \la_2, \ldots)$ of $n$ in the graded
algebra $S^*V$, which has a generating function $P_\la (t):
=\sum_{j\geq 0} m_\la (S^jV) t^j$. Kirillov \cite{Ki} has obtained
the following elegant formula for $P_\la (t)$
(also compare Steinberg~\cite{S}):
\begin{align*}
P_\la (t) =\frac{t^{n(\la)}} {\prod_{(i,j)\in \la}(1-t^{h_{ij}})},
\end{align*}
where $h_{ij}$ is the hook length associated to a cell $(i,j)$ in
the Young diagram of $\la$, and
\begin{eqnarray}   \label{n lambda}
n(\lambda)=\sum_{i\geq 1}(i-1)\lambda_i.
\end{eqnarray}

The generating function for the bi-graded $S_n$-invariants in
$S^*V \otimes \wedge^* V$ was computed in Solomon \cite{So}, see
\eqref{Solomon}. More generally, Kirillov and Pak \cite{KP}
obtained the bi-graded multiplicity of the Specht module $S^\la$
for any $\la$ in $S^*V \otimes \wedge^* V$, see \eqref{eq:KPak}.

\subsection{}

According to Schur \cite{Sch}, the symmetric group $S_n$ affords a
double cover $\widetilde{S}_n$:
$$
1 \longrightarrow \Z_2 \longrightarrow \widetilde{S}_n
\longrightarrow S_n \longrightarrow 1.
$$
Let us write $\Z_2 =\{1,z\}$. The spin (or projective)
representation theory of $S_n$, or equivalently, the
representation theory of the spin group algebra $\C S_n^- =\C
\widetilde{S}_n/\langle z+1\rangle$, has been systematically
developed by Schur (see J\'ozefiak \cite{Jo1} for an excellent
modern exposition via a systematic use of superalgebras). Rich
algebraic combinatorics of Schur $Q$-functions and shifted
tableaux has been developed by Sagan \cite{Sa1} and Stembridge
\cite{St} (also see Nazarov \cite{Naz}) in relation to the
irreducible spin representations and characters of $S_n$.

\subsection{}

The goal of this paper is to formulate and prove the spin analogue
of the graded multiplicity formulas in \ref{subsec:Sn}.

The results of this paper, though looking classical, have not
appeared in literature to our knowledge; however, it is expected
that such simple results, once formulated, can be also derived by
other approaches. It strongly suggests that the spin invariant
theory of Weyl groups, or of finite groups in general, in the
sense of this paper is a very interesting research direction to
pursue. It is also natural to ask for the spin double counterpart,
a spin analogue of Kostka polynomials, an interpretation of the
graded multiplicity for the spin coinvariant algebra as generic
degrees for (quantum) Hecke-Clifford algebras, etc. We hope to
return to these topics at another occasion.

\subsection{}
It is known \cite{Jo2, Se2, St2, Ya} (see Kleshchev
\cite[Chap.~13]{Kle}) that the representation theory of spin
symmetric group (super)algebra $\mathbb{C}S_n^-$ is
super-equivalent to its counterpart for Hecke-Clifford
(super)algebra $\HC := \mathcal{C}_n \rtimes \C S_n$; See
Section~\ref{subsec:super equiv} for notations and precise
formulations. (All the algebras and modules in this paper are
understood to admit a $\Z_2$-graded structure; however we will
avoid using the terminology of supermodules.) Let $D^\la_-$ denote
the simple $\mathbb{C}S_n^-$-module and $D^\la$ denote the simple
$\HC$-module, associated to a strict partition $\la$ of $n$. The
Clifford algebra $\Cl$ is naturally a simple module over the
algebra $\HC$ (which is identified with $D^{(n)}$), and it is the
counterpart of the basic spin $\mathbb{C}S_n^-$-module
$\mathcal{B}_n:=D^{(n)}_-$.

In Proposition~\ref{prop:mult.equiv} we show that,
for an arbitrary $S_n$-module $M$, the multiplicity problem for a simple
$\mathbb{C}S_n^-$-module $D^\la_-$ in $\mathcal B_n\otimes M$ is
essentially equivalent to the multiplicity problem for a simple
$\HC$-module $D^\la$ in $\Cl\otimes M$.
Therefore, in this paper, we shall mainly work with the algebra
$\HC$, keeping in mind that the results can be transferred to
the setting for $\spin$.

\subsection{}
Our first main result provides the graded multiplicity of the
simple $\HC$-module $D^\la$  in $\mathcal{C}_n
\otimes S^* V$ for $V=\C^n$. For a partition $\la$ of $n$ with
length $\ell(\la)$, we set
\begin{eqnarray*}
\delta(\lambda)= \left \{
 \begin{array}{ll}
 0,
 & \text{ if }\ell(\lambda) \text{ is even}, \\
 1
 , & \text{ if }\ell(\lambda) \text{ is odd}.
 \end{array}
 \right.
\end{eqnarray*}
If $\la$ is moreover a strict partition, we denote by $\la^*$ the
shifted diagram of $\la$, by $c_{ij}$  the content, and by
$h^*_{ij}$ the shifted hook length of the cell $(i,j) \in \la^*$
(see Section~\ref{sec:special} for precise definitions).

\begin{thma}   \label{th:S(V)}
Let $\la$ be a strict partition of $n$. The graded multiplicity of
$D^{\la}$ in the $\HC$-module $\mathcal{C}_n\otimes
S^* V$ is
\begin{align}   \label{hook}
2^{-\frac{\ell(\la)+\delta(\la)}{2}} \frac{t^{n(\la)}\prod_{(i,j)\in
\la^*}(1+t^{c_{ij}})} {\prod_{(i,j)\in \la^*}(1-t^{h^*_{ij}})}.
\end{align}
\end{thma}
The lowest degree term in \eqref{hook} is
$2^{\frac{\ell(\la)-\delta(\la)}{2}} t^{n(\la)}$, thanks to the
contribution $2^{\ell(\la)}$ from the product over the main diagonal
of $\la^*$ in the numerator. Theorem~A can be reformulated in
terms of the graded multiplicity of a coinvariant algebra which is
isomorphic to a graded regular module of $\HC$, see
Theorem~\ref{Cliff.tensor coin.}. In the spirit of a classical
theorem of Borel which identifies the coinvariant algebra of a
Weyl group with the cohomology ring of the corresponding flag
variety, the coinvariant algebra of $\HC$ should be regarded as the
cohomology ring (which has yet to be developed) of the flag
variety for the queer Lie supergroup.

To prove Theorem~A, we first obtain an expression of the graded
multiplicity in terms of the principal specialization of the Schur
$Q$-function, $Q_\la (t^\bullet) :=Q_\la (1,t,t^2,\ldots)$, and
then apply the following formula.

\begin{thmb}\label{th:t-Schur Q}
For a strict partition $\la$ of $n$, we have
\begin{align}
Q_\la (t^\bullet) =\frac{t^{n(\la)}\prod_{(i,j)\in
\la^*}(1+t^{c_{ij}})} {\prod_{(i,j)\in \la^*}(1-t^{h^*_{ij}})}.
\label{eqn:t-Schur Q}
\end{align}
\end{thmb}
It is well known (cf. \cite{Sa1, St}) that a Schur $Q$-function
can be written as a sum over the so-called marked shifted tableaux
of a given shape.
We establish in Theorem~\ref{thm:bijection} a bijection between marked shifted tableaux and
certain new combinatorial objects which we call  {\em colored
shifted tableaux}. Theorem~B is an easy consequence of such a bijection.

We show in Proposition~\ref{equiv} that the
formula \eqref{eqn:t-Schur Q} is equivalent to another formula of
Rosengren \cite[Proposition 3.1]{R}, who derived it from a Schur
function identity of Kawanaka~\cite{Ka}.

\subsection{}

Another result of this paper is a formula for the bi-graded
multiplicity of the simple $\HC$-module $D^\la$ in
$\mathcal{C}_n\otimes S^* V \otimes \wedge^* V$:
$$
\sum_{p=0}^\infty \sum_{q=0}^n t^p s^q m_\la(\Cl \otimes S^pV \otimes \wedge^qV).
$$
We shall adopt the following
short-hand notation for a specialization of Schur $Q$-function in
$2$-variables $s$ and $t$:
$$
Q_\la (t^\bullet; st^\bullet) :=Q_\la (1,t,t^2,\ldots;
s,st,st^2,\ldots).
$$

\begin{thmc}\label{th:Koszul}
Let $\la$ be a strict partition of $n$. The bi-graded multiplicity
of  $D^{\lambda}$ in the $\HC$-module
$\mathcal{C}_n\otimes S^{*}V\otimes \wedge^{*}V$ is
\begin{equation}  \label{eq:bi mult}
2^{-\frac{l(\lambda)+\delta(\lambda)}{2}}
Q_{\lambda}(t^{\bullet};st^{\bullet}).
\end{equation}
\end{thmc}
Setting $s=0$, we recover
Theorem~A from Theorem C. On the other hand, setting $t=0$, we
obtain a graded multiplicity formula of $D^\la$ in $\Cl \otimes
\wedge^*V$, see Corollary~\ref{mult wedge}.
We may also consider a Koszul $\Z$-grading
which counts the standard generators of $S^*V$ as
degree $2$ and the standard generators of $\wedge^*V$
as degree $1$. It follows by Theorem~C that, for the Koszul
grading, the graded multiplicity in $\mathcal{C}_n\otimes
S^*V\otimes \wedge^*V$ is given by the
same formula \eqref{hook} above. It will be nice to
obtain a closed formula for $Q_\la
(t^\bullet; st^\bullet)$.

\subsection{}

The paper is organized as follows. In Section~\ref{sec:special},
we provide a bijective proof of Theorem~B.  The graded multiplicities in $\mathcal{C}_n\otimes
S^*V$ are studied and Theorem~A is proved in
Section~\ref{sec:mult}. In Section~\ref{sec:bigraded}, we study
the bi-graded multiplicities in $\mathcal{C}_n\otimes
S^*V\otimes \wedge^*V$, and prove
Theorem~C.

{\bf Acknowledgments.} J.W. is supported by a semester dissertation fellowship
from the Department of Mathematics, University of Virginia.
W.W. is partially supported by NSF grant DMS-0800280.

\section{Principal specialization of Schur $Q$-functions}\label{sec:special}

In this section, we shall provide a bijective proof for Theorem~B,
after first recalling some basics on strict partitions and Schur
$Q$-functions (\cite{Sa1, St, Mac}).

\subsection{Strict partitions and shifted diagrams}

Let $n\in \Z_+$. We denote a composition
$\lambda=(\lambda_1,\lambda_2,\ldots)$ of $n$ by $\lambda\models
n$, and denote a partition $\la$ of $n$ by $\la \vdash n$. A
partition $\la$ will be identified with its Young diagram, that
is, $\la=\{(i,j)\in\mathbb{Z}^2~|~1\leq i\leq \ell(\lambda), 1\leq
j\leq \lambda_i\}$. To each cell $(i,j)\in \la$, we associate its content
$c_{ij}=j-i$ and
hook length $h_{ij}=\lambda_i+\lambda_j'-i-j+1$, where
$\lambda'=(\lambda_1',\lambda_2',\ldots)$ is the conjugate
partition of $\lambda$.

Suppose that the main diagonal of the Young diagram $\la$ contains
$r$ cells. Let $\alpha_i=\lambda_i-i$ be the number of cells in
the $i$th row of $\lambda$ strictly to the right of $(i,i)$, and
let $\beta_i=\lambda_i'-i$ be the number of cells in the $i$th
column of $\lambda$ strictly below $(i,i)$, for $1\leq i\leq r$.
We have $\alpha_1>\alpha_2>\cdots>\alpha_r\geq0$ and
$\beta_1>\beta_2>\cdots>\beta_r\geq0$. Then the Frobenius notation
for a partition is
$\lambda=(\alpha_1,\ldots,\alpha_r|\beta_1,\ldots,\beta_r)$. For
example, if $\lambda=(5,4,3,1)$, then $\alpha =(4,2,0),
\beta=(3,1,0)$ and hence $\lambda=(4,2,0|3,1,0)$ in Frobenius
notation.

Suppose that $\lambda$ is a strict partition of $n$, denoted by
$\lambda\vdash_s n$. Let $\la^*$ be the associated shifted
Young diagram, that is,
$$
\la^*=\{(i,j)~|~1\leq i\leq \ell(\lambda), i\leq j\leq\lambda_i+i-1
\}
$$
which is obtained from the ordinary Young diagram by shifting the
$k$th row to the right by $k-1$ squares, for each $k$.
Given $\lambda\vdash_s n$ with $\ell(\lambda)=\ell$, define its double
partition $\widetilde{\lambda}$ to be
$\widetilde{\lambda}=(\lambda_1,\ldots,\lambda_{\ell} |
\lambda_1-1,\lambda_2-1,\ldots,\lambda_{\ell}-1)$ in Frobenius
notation. Clearly, the shifted Young diagram $\la^*$ coincides
with the part of $\widetilde{\lambda}$ that lies above the main
diagonal. For each cell $(i,j)\in \lambda^*$, denote by $h^*_{ij}$
the associated hook length in the Young diagram
$\widetilde{\lambda}$, and set the content $c_{ij}=j-i$.

For example, let $\lambda= (4, 2, 1)$. The corresponding shifted
diagram and double diagram are
$$
\la^*=\young(\,\,\,\,,:\,\,,::\,)
\qquad \qquad
\widetilde{\lambda}=\young(\,\,\,\,\,,\,\,\,\,,\,\,\,\,,\,)
$$
The hook lengths and contents for each cell in $\la$
are respectively as follows:
$$
\young(6541,:32,::1)
\qquad \qquad \young(0123,:01,::0)
$$

\subsection{Schur $Q$-functions}

Let $\lambda$ be a strict partition with $\ell(\lambda)=\ell$. Suppose
$m\geq \ell$. The associated Schur $Q$-function
$Q_{\lambda}(z_1,z_2,\ldots,z_m)$ is defined by
\begin{align}
Q_{\lambda}(z_1,z_2,\ldots,z_m)=2^{\ell}\sum_{w\in
S_m/S_{m-\ell}}w\Big(z_1^{\lambda_1}\cdots z_{\ell}^{\lambda_{\ell}}
\prod_{1\leq i\leq \ell}\prod_{i<j\leq
m}\frac{z_i+z_j}{z_i-z_j}\Big),\label{SchurQ1}
\end{align}
where the symmetric group $S_m$ acts by permuting the variables
$z_1,\ldots,z_m$ and $S_{m-\ell}$ is the subgroup acting on
$z_{\ell+1},\ldots,z_{m}$. The definition of $Q_{\lambda}(z_1,z_2,\ldots,z_m)$
stabilizes as $m$ goes to infinity, and we write
$Q_{\lambda}(z) =Q_{\lambda}(z_1,z_2,\ldots)$, the symmetric functions
in infinitely many variables $z=(z_1,z_2,\ldots)$.
For $y=(y_1, y_2,\ldots)$, the following identity holds (see
\cite[III, \S 8]{Mac}):
\begin{align}
\prod_{i,j}\frac{1+y_iz_j}{1-y_iz_j} =\sum_{\lambda\text{:
strict}}2^{-\ell(\lambda)}Q_{\lambda}(y)Q_{\lambda}(z).
 \label{Cauchy identity}
\end{align}

It will be convenient to introduce another family of symmetric
functions $q_{\nu}(z)$ for any composition
$\nu=(\nu_1,\nu_2,\ldots)$ as follows:
\begin{align}
q_0(z)&=1\notag,\\
q_r(z)&=Q_{(r)}(z), \quad \text{ for }r\geq 1,\notag\\
q_{\nu}(z)&=q_{\nu_1}(z)q_{\nu_2}(z)\cdots.\notag
\end{align}
The generating function for $q_r(z)$ is
\begin{align}
\sum_{r\geq 0}q_r(z)u^r=\prod_i\frac{1+z_iu}{1-z_iu}.\label{gen.fun.qr}
\end{align}

We will write $q_r =q_r(z)$, etc.,
whenever there is no need to specify the variables.
Let $\Gamma_\C$ be the $\C$-algebra generated by $q_r, r\geq 1$, that is,
\begin{align}
\Gamma_\C =\C [q_1,q_2,\ldots].\label{Gamma}
\end{align}
Then $Q_{\lambda}$ for strict partitions $\la$ form a basis of
$\Gamma_\C$.

%
%
\subsection{Marked shifted tableaux and Schur $Q$-functions}

Denote by $\mathbf{P}'$ the ordered alphabet
$\{1'<1<2'<2<3'<3\cdots\}$. The symbols $1',2',3',\ldots$ are said
to be marked, and we shall denote by $|a|$ the unmarked version of
any $a\in\mathbf{P}'$; that is, $|k'| =|k| =k$ for each $k \in
\N$. For a strict partition $\la$, a {\it marked shifted tableau}
$T$ of shape $\lambda$, or a  {\it marked shifted $\la$-tableau}
$T$, is an assignment
$T:\la^*\rightarrow\mathbf{P}'$ satisfying:
\begin{itemize}
\item[(M1)] The letters are weakly increasing along each row and
column.

\item[(M2)] The letters $\{1,2,3,\ldots\}$ are strictly increasing
along each column.

\item[(M3)] The letters $\{1',2',3',\ldots\}$ are strictly
increasing along each row.   \label{M3}
\end{itemize}
%
%

For a marked shifted tableau $T$ of shape $\la$, let $\alpha_k$
be the number of cells $(i,j)\in \la^*$ such that $|T(i,j)|=k$ for
$k\geq 1$. The sequence $(\alpha_1,\alpha_2,\alpha_3,\ldots)$ is
called the {\em weight} of $T$. The Schur $Q$-function  associated
to $\lambda$ can be interpreted as (see \cite{Sa1, St, Mac})
\begin{align}
Q_{\lambda}(x)=\sum_{T}x^{T},\label{SchurQ}
\end{align}
where the summation is over all marked shifted $\la$-tableaux, and
$x^T=x_1^{\alpha_1}x_2^{\alpha_2}x_3^{\alpha_3}\cdots$ if $T$ has
weight $(\alpha_1,\alpha_2,\alpha_3,\ldots)$.
Denote by $|T|=\sum_{k\geq1}k\alpha_k$ if the weight of $T$ is $(\alpha_1,\alpha_2,\ldots)$.

\begin{example} \label{markedTableau0}
Suppose $\lambda=(5,4,2)$.
The following is an example of a marked shifted tableau of shape
$\lambda$ and its weight is $(2,5,4)$:
\begin{align}
T=\begin{tabular}{cccccc}
  \cline{1-5}
  \multicolumn{1}{|c|}{$1'$}&\multicolumn{1}{|c|}{$1$}
  &\multicolumn{1}{|c|}{$2'$}&\multicolumn{1}{|c|}{$2$}&\multicolumn{1}{|c|}{$2$}
  \\
  \cline{1-5}
  &\multicolumn{1}{|c|}{$2'$}&\multicolumn{1}{|c|}{$2$}&
  \multicolumn{1}{|c|}{$3'$}&\multicolumn{1}{|c|}{$3$}\\
  \cline{2-5}
  &&\multicolumn{1}{|c|}{$3'$}&\multicolumn{1}{|c|}{$3$}\\
  \cline{3-4}
\end{tabular}\notag
\end{align}
\end{example}

A {\em shifted reverse plane tableau} $S$
of shape $\lambda$ is a labeling of cells in the shifted diagram
$\lambda^*$ with nonnegative integers so that the rows and columns
are weakly increasing. Denote by $|S|$ the summation of the
entries in $S$. It is known (cf. \cite[Theorem 6.2.1]{Sa2})  that
\begin{align}
\sum_St^{|S|}=\prod_{(i,j)\in
\lambda}\frac{1}{1-t^{h^*_{ij}}},\label{Sagan}
\end{align}
summed over all shifted reverse plane
 tableaux of shape $\lambda$.

\subsection{A bijection theorem}

Let $\lambda$ be a strict partition. A {\it colored shifted
tableau} $C$ is an assignment $C:\la^*\rightarrow\mathbf{P}'$ such
that the associated assignment
$\overline{C}:\la^*\rightarrow\mathbb{Z}_+$ defined by
\begin{align}
\overline{C}(i,j)&=\left\{
\begin{array}{ll}
 |C(i,j)|-j, &\text{ if } C(i,j) \text{ is marked}, \\
     |C(i,j)|-i, &\text{ if } C(i,j) \text{ is unmarked}
     \end{array}
 \right.\notag
\end{align}
is a shifted reverse plane tableau of shape $\lambda$. The weight of a colored
shifted tableau is defined in the same way as for marked shifted
tableaux.  Denote by $|C|=\sum_{k\geq1}k\alpha_k$ if the weight of $C$ is $(\alpha_1,\alpha_2,\ldots)$.

\begin{thm}\label{thm:bijection}
Suppose that $\lambda$ is a strict partition of $n$ and
$\alpha=(\alpha_1,\alpha_2,\ldots)$ is a composition of $n$. Then
there exists a bijection between the set of marked shifted
$\la$-tableaux of weight $\alpha$ and the set of
colored shifted $\la$-tableaux of weight $\alpha$.
\end{thm}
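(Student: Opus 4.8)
The plan is to construct the bijection cell-by-cell in a way that only changes the ``interpretation'' of each entry, not its unmarked value, so that the weight is automatically preserved. Given a marked shifted $\la$-tableau $T$, define a colored shifted tableau $C$ with $|C(i,j)|=|T(i,j)|$ for every cell, and with $C(i,j)$ marked if and only if $T(i,j)$ is marked. In other words, $C=T$ as a raw $\mathbf P'$-assignment; all that must be verified is that the derived assignment $\overline C$, where $\overline C(i,j)=|T(i,j)|-j$ on marked cells and $|T(i,j)|-i$ on unmarked cells, is a shifted reverse plane tableau, i.e.\ its entries are nonnegative and weakly increasing along rows and columns. Since the map $T\mapsto C$ on the level of symbols is the identity, the inverse will be equally transparent once we check that the conditions (M1)--(M3) on $T$ are \emph{equivalent} to the reverse-plane-tableau conditions on $\overline C$; this equivalence is the whole content of the theorem.

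First I would record the elementary inequalities. Along a row, consecutive cells are $(i,j)$ and $(i,j+1)$; along a column, $(i,j)$ and $(i+1,j)$. For $\overline C$ I must compare $\overline C(i,j)$ with $\overline C(i,j+1)$ and with $\overline C(i+1,j)$, splitting into the four cases according to which of the two cells is marked. The key arithmetic observations are: passing from an unmarked $a$ in cell $(i,j)$ to an unmarked entry one step right, $\overline C$ stays the same if the value stays the same and increases if the value increases — matching ``weakly increasing along rows''; passing to a marked entry one step right subtracts an extra $1$ from the index offset (since we switch from $-i$ to $-(j+1)$), and condition (M3) strict-increase of primed letters along rows is exactly what is needed to keep $\overline C$ weakly increasing; and similarly down a column, (M2) strict increase of unprimed letters compensates the offset shift from $-i$ to $-(i+1)$. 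Nonnegativity follows because the smallest a marked entry in column $j$ can be is forced by (M1)/(M3) to be at least $j$, and the smallest an unmarked entry in row $i$ can be is at least $i$; these are precisely the ``staircase'' lower bounds built into a marked shifted tableau.

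The main obstacle, and where I would spend the most care, is the bookkeeping in the mixed cases — an unmarked entry adjacent to a marked entry — where one must simultaneously use two of the three tableau conditions and track which offset ($-i$ vs.\ $-j$) applies on each side. It is here that one sees why the definition of $\overline C$ uses the \emph{column} index for marked cells and the \emph{row} index for unmarked cells: the asymmetry is tuned so that the extra $+1$ gained from (M2) or (M3) exactly cancels the extra $-1$ coming from the index shift along the relevant direction. Once all four adjacency cases (times two directions) are verified in both directions, the equivalence of the two sets of conditions is complete, the identity-on-symbols map is a well-defined bijection, and weight-preservation is immediate since $|C(i,j)|=|T(i,j)|$ cell by cell. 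Finally I would illustrate the bijection on the running example $\la=(5,4,2)$ from Example~\ref{markedTableau0} to make the offset conventions concrete.
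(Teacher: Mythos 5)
Your proposed bijection is simply the identity map on $\mathbf P'$-labelings, and your argument rests on the claim that conditions (M1)--(M3) for $T$ are equivalent to the requirement that $\overline T$ be a shifted reverse plane tableau. That equivalence is false, and the identity map is not a bijection between the two sets (they are generally different subsets of $\mathbf P'$-labelings of $\la^*$).

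The failure occurs precisely in the ``unmarked followed by marked'' adjacency cases, which you wave off as bookkeeping. Take an unmarked entry $a$ at $(i,j)$ and a marked entry $b'$ at $(i,j+1)$. Then $\overline C(i,j)=a-i$ and $\overline C(i,j+1)=b-(j+1)$, so weak increase requires $b\ge a+(j+1-i)$. But (M1) only gives $a<b'$ in the order $1'<1<2'<\cdots$, i.e.\ $b\ge a+1$, and (M3) says nothing about the transition from an unmarked letter to a marked one. In a shifted diagram $j\ge i$, and off the main diagonal $j+1-i\ge 2$, so the needed inequality can fail. It does fail in the paper's own Example~\ref{markedTableau0}: there $T(1,2)=1$ (unmarked) and $T(1,3)=2'$ (marked), giving $\overline T(1,2)=0$ but $\overline T(1,3)=2-3=-1$, which is negative and decreases along the row; hence $T$ itself is \emph{not} a colored shifted tableau, and the colored tableau $T^3$ the paper associates to $T$ has a different labeling. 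Your phrase ``subtracts an extra $1$ from the index offset'' implicitly assumes $j=i$; off the diagonal the offset jump is $j+1-i$, and nothing in (M1)--(M3) compensates for it.

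So the theorem genuinely requires a nontrivial correspondence, not a re-reading of the same filling. The paper's proof builds the colored tableau from $T$ by an insertion/exchange procedure (akin to jeu de taquin): one adds the letters $k$ layer by layer, and when a primed $k'$ sits to the right of or below an entry that would violate the reverse-plane inequalities, one swaps it leftward/upward according to a case analysis on the derived offsets $\bar b,\bar c$; reversibility is then checked by running an analogous downward/rightward procedure. Your proof is missing this entire mechanism, and the gap is not repairable by more careful bookkeeping within the identity-map framework.
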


\begin{proof}
Suppose that $T$ is a marked shifted tableau of shape $\lambda$ and
weight $\alpha=(\alpha_1,\alpha_2,\ldots)$.
Set $m=\max\{|T(i,j)|~|~(i,j)\in \la^*\}$. For each $1 \le k \le m$,
$\la^{k,*} =\{(i,j)\in\la^*~|~|T(i,j)|\leq k\}$ is a shifted diagram of a strict partition $\la^k$,
and $\la^{1} \subseteq \la^{2}\subseteq\cdots\subseteq  \la^{m}$.

We shall construct by induction on $k$ a chain of colored shifted tableaux $T^k$ of shape
$\la^{k}$ and weight $\alpha^k=(\alpha_1,\ldots,\alpha_k)$, for $1\leq k\leq m$.
Set
$T^1: \la^{1,*}\rightarrow \mathbf{P}'$ to be the restriction of $T$ to $\la^{1,*}$.
Since $T$ is a marked shifted tableau, $\la^{1}$ is a one-row partition
and hence $T^1$ is already a colored shifted tableau of weight $\alpha^1=(\alpha_1)$.

Suppose that $T^{k-1}$ is a colored shifted tableau of shape
$\la^{k-1}$ and weight $\alpha^{k-1}=(\alpha_1,\ldots,\alpha_{k-1})$.
In order to construct $T^k$ from $T^{k-1}$,
we start with an intermediate tableau $T_k$ defined by
\begin{align*}
T_k: \la^{k,*} &\longrightarrow \mathbf{P}'\notag\\
       (i,j)&\mapsto\left\{\begin{array}{ll}
                           T^{k-1}(i,j), & \text{ if }(i,j)\in \la^{(k-1),*} \\
                           T(i,j),& \text{ if }(i,j)\in \la^{k,*} / \la^{(k-1),*}.
                         \end{array}\right.
\end{align*}
There is at most one cell labeled by
$k'$ in each row of $T_{k}$ since $T$ satisfies~(M3).
Suppose that the cells labeled by $k'$ in
$T_{k}$ are $(i_1,j_1),(i_2,j_2),\ldots,(i_p,j_p)$ with
$i_1<i_2<\cdots <i_p$. Start with the topmost cell $(i_1,j_1)$ and label
its left and upper neighboring cells $(i_1,j_1-1)$ and
$(i_1-1,j_1)$, if they exists,  as
\begin{center}
\begin{tabular}{cc}
  \cline{2-2}&\multicolumn{1}{|c|}{$c$}\\
  \cline{1-2}
  \multicolumn{1}{|c|}{$b$}&\multicolumn{1}{|c|}{$k'$}\\
  \cline{1-2}
\end{tabular}
\end{center}
(In case when either the left or the upper neighboring cell is missing, the
exchange procedure below is simplified in an obvious manner).
Set
\begin{align*}
\bar{b}=\left\{
\begin{array}{ll}
 |b|-(j_1-1), &\text{ if } b \text{ is marked}, \\
     |b|-i_1, &\text{ otherwise;}
     \end{array}
      \right.
 \end{align*}
 \begin{align*}
 \bar{c}=\left\{
\begin{array}{ll}
 |c|-j_1, &\text{ if }c \text{ is marked}, \\
     |c|-(i_1-1), &\text{ otherwise.}
     \end{array}
 \right.
\end{align*}

If $k-j_1<\bar{b}$ or
$k-j_1<\bar{c}$, exchange $k'$ and $b$ or $c$ in $T_k$ as follows:
\begin{align}
\text{Case I}\; (\bar{b}\geq\bar{c}):\qquad & \begin{tabular}{ccc}
  \cline{2-3}&\multicolumn{2}{|c|}{$c$}\\
  \cline{1-3}
  \multicolumn{1}{|c|}{$b$}&\multicolumn{2}{|c|}{$k'$}\\
  \cline{1-3}
\end{tabular}
   \longrightarrow
\begin{tabular}{ccc}
  \cline{3-3}&&\multicolumn{1}{|c|}{$c$}\\
  \cline{1-3}
  \multicolumn{2}{|c|}{$k'$}&\multicolumn{1}{|c|}{$b$}\\
  \cline{1-3}
\end{tabular}   \notag\\
 \text{ Case II }\;
(\bar{c}>\bar{b}):\qquad &\begin{tabular}{ccc}
  \cline{2-3}&\multicolumn{2}{|c|}{$c$}\\
  \cline{1-3}
  \multicolumn{1}{|c|}{$b$}&\multicolumn{2}{|c|}{$k'$}\\
  \cline{1-3}
\end{tabular}\longrightarrow\begin{tabular}{ccc}
  \cline{2-3}&\multicolumn{2}{|c|}{$k'$}\\
  \cline{1-3}
  \multicolumn{1}{|c|}{$b$}&\multicolumn{2}{|c|}{$c$}\\
  \cline{1-3}
\end{tabular}    \notag
\end{align}
Note that $b$ is unmarked  in Case~I and
$c$ is unmarked in Case~II.
Hence the resulting diagram
\begin{align*}
\text{I: } \quad \begin{tabular}{ccc}
  \cline{2-3}&\multicolumn{2}{|c|}{$c$}\\
  \cline{1-3}
  \multicolumn{1}{|c|}{$k'$}&\multicolumn{2}{|c|}{$b$}\\
  \cline{1-3}
\end{tabular}
\qquad \text{ or } \qquad
\text{II: } \quad \begin{tabular}{ccc}
  \cline{2-3}&\multicolumn{2}{|c|}{$k'$}\\
  \cline{1-3}
  \multicolumn{1}{|c|}{$b$}&\multicolumn{2}{|c|}{$c$}\\
  \cline{1-3}
\end{tabular}
\end{align*}
satisfies the requirement for colored shifted tableaux.
Keep repeating the above procedure for the new cells occupied with this $k'$, until it stops.
Then move on to apply
the same procedure above to the cells $(i_2,j_2),  \ldots, (i_p,j_p)$
one by one, and denote by $T^k$ the resulting tableau in the end.

We claim that $T^k$ is a colored shifted tableau. By induction
hypothesis, $T^{k-1}$ is a colored shifted tableau. Clearly, the
exchange procedure above by definition ensures that the
requirement being a colored shifted tableau is already fulfilled
for the cells in $T^k$ other than those occupied by $k$. So it
remains to check the conditions on each cell $(i,j)\in \la^{k,*}$
with $T^k(i,j)=k$. Assume that the cell $(i,j-1)$ in $T^k$, if
it exists, is labeled by $d\in\mathbf{P}'$. Note that $j-1\geq i$
and $|d|\leq k$. If $d$ is unmarked, then $|d|-i\leq k-i$. If $d$
is marked, then $|d|-(j-1)\leq|d|-i\leq k-i$. Similarly, assume
that the cell $(i-1,j)$ in $T^k$, if it exists, is labeled by
$e\in\mathbf{P}'$. For unmarked $e$, we have $|e|<k$ or
equivalently $|e|-(i-1)\leq k-i$, since there is at most one
unmarked $k$ in each column of $T^k$. For marked $e$, we have
$|e|\leq k$ and hence $|e|-j \leq k-i$, since $j\geq i$. This
proves the claim.

Hence, we have constructed a colored shifted tableau $T^m$
of the same shape and weight as for $T$ which we started with.


We claim the exchange procedure above from $T$ to $T^m$ is reversible. 
It suffices to show that the above procedure from $T^{k-1}$
to $T^k$ is invertible for each $k$. Denote by $T^{k,0}$ the resulting tableau
after removing cells labeled by unmarked $k$ from $T^k$.
There exists at most one cell labeled by marked $k'$ in each row of $T^{k,0}$,
and suppose that these cells are
$(i_1,j_1),(i_2,j_2),\ldots,(i_p,j_p)$ in $T^{k,0}$ with $i_1>i_2>\ldots>i_p$.
Start with the lowest cell $(i_1,j_1)$ and suppose that
its right and lower neighboring cells $(i_1,j_1+1)$ and $(i_1+1,j_1)$, if they exist, in $T^{k,0}$
are labeled by $b,c\in\mathbf{P}'$ as follows:
\begin{align}
\begin{tabular}{ccc}
  \cline{1-3}\multicolumn{2}{|c|}{$k'$}&\multicolumn{1}{|c|}{$b$}\\
  \cline{1-3}
  \multicolumn{2}{|c|}{$c$}&\\
  \cline{1-2}
\end{tabular}\notag
\end{align}
Set
\begin{align}
\tilde{b}=\left\{
\begin{array}{ll}
 |b|-(j_1+1), &\text{ if } b \text{ is marked}, \\
     |b|-i_1, &\text{ otherwise},
     \end{array}\notag
 \right.
\end{align}

\begin{align}
 \tilde{c}=\left\{
\begin{array}{ll}
 |c|-j_1, &\text{ if }c \text{ is marked}, \\
     |c|-(i_1+1), &\text{ otherwise}.
     \end{array}\notag
 \right.
\end{align}
If $k'>b$ or $k'>c$, exchange $k'$  and $b$ or $c$ in $T^{k,0}$ as follows:
\begin{align}
\text{Case I} \; (\tilde{b}\leq \tilde{c}):\qquad &\begin{tabular}{ccc}
  \cline{1-3}\multicolumn{2}{|c|}{$k'$}&\multicolumn{1}{|c|}{$b$}\\
  \cline{1-3}
  \multicolumn{2}{|c|}{$c$}&\\
  \cline{1-2}
\end{tabular}
\longrightarrow
\begin{tabular}{ccc}
  \cline{1-3}\multicolumn{1}{|c|}{$b$}&\multicolumn{2}{|c|}{$k'$}\\
  \cline{1-3}
  \multicolumn{1}{|c|}{$c$}&&\\
  \cline{1-1}
\end{tabular}\notag\\
\text{Case II} \; (\tilde{b}>\tilde{c}):\qquad  &\begin{tabular}{ccc}
  \cline{1-3}\multicolumn{2}{|c|}{$k'$}&\multicolumn{1}{|c|}{$b$}\\
  \cline{1-3}
  \multicolumn{2}{|c|}{$c$}&\\
  \cline{1-2}
\end{tabular}
\longrightarrow
\begin{tabular}{ccc}
  \cline{1-3}\multicolumn{2}{|c|}{$c$}&\multicolumn{1}{|c|}{$b$}\\
  \cline{1-3}
  \multicolumn{2}{|c|}{$k'$}&\\
  \cline{1-2}
\end{tabular}\notag
\end{align}
Keep repeating the above procedure to the new cell occupied by this $k'$,
until it stops. Then move on to the cells $(i_2,j_2), \ldots, (i_p,j_p)$
one by one and apply
the same procedure. Denote by $T^{k,1}$ the resulting tableau in the end.
Removing the cells labeled by $k'$ from $T^{k,1}$, we recover the tableau $T^{k-1}$.
\end{proof}

\begin{example} Suppose $\lambda=(5,4,2)$ and
$T$ is the marked shifted tableau given by Example~\ref{markedTableau0}.
Then the colored shifted tableau corresponding to $T$ is
$T^3$, where
\begin{align*}
T^1&=\begin{tabular}{cc}
  \cline{1-2}
  \multicolumn{1}{|c|}{$1'$}&\multicolumn{1}{|c|}{$1$}
  \\
  \cline{1-2}
\end{tabular}
\qquad\quad T^2=\begin{tabular}{ccccc}
  \cline{1-5}
  \multicolumn{1}{|c|}{$1'$}&\multicolumn{1}{|c|}{$2'$}
  &\multicolumn{1}{|c|}{1}&\multicolumn{1}{|c|}{2}&\multicolumn{1}{|c|}{$2$}\\
  \cline{1-5}
  &\multicolumn{1}{|c|}{$2'$}&\multicolumn{1}{|c|}{$2$}\\
  \cline{2-3}
\end{tabular}\notag\\ \\
 & T^3 =\begin{tabular}{ccccc}
  \cline{1-5}
  \multicolumn{1}{|c|}{$1'$}&\multicolumn{1}{|c|}{$2'$}
  &\multicolumn{1}{|c|}{$3'$}&\multicolumn{1}{|c|}{$1$}&\multicolumn{1}{|c|}{2}\\
  \cline{1-5}
  &\multicolumn{1}{|c|}{$2'$}&\multicolumn{1}{|c|}{$2$}
  &\multicolumn{1}{|c|}{$2$}&\multicolumn{1}{|c|}{$3$}\\
  \cline{2-5}
  &&\multicolumn{1}{|c|}{$3'$}&\multicolumn{1}{|c|}{$3$}\\
  \cline{3-4}
\end{tabular}\notag
\end{align*}
\end{example}

\subsection{Proof of Theorem~B}

\begin{proof}
It follows by Theorem~\ref{thm:bijection} that
\begin{align}  \label{T=C}
\sum_T t^{|T|} =\sum_{C}t^{|C|},
\end{align}
where the first summation is over all marked shifted $\la$-tableaux $T$
and  the second summation is over all colored shifted $\la$-tableaux $C$.
The left hand side of \eqref{T=C} is equal to $Q_{\lambda}(t,t^2,t^3, \ldots)$ by (\ref{SchurQ}).

It follows from the definition of colored shifted tableaux and then \eqref{Sagan}  that
\begin{align*}
 \sum_{C}t^{|C|}
 &=\Big(\prod_{(i,j)\in
 \la^*}(t^i+t^j)\Big)\sum_{S}t^{|S|} \\
 &=\frac{\prod_{(i,j)\in \la^*}(t^i+t^{j})}
{\prod_{(i,j)\in \la^*}(1-t^{h^*_{ij}})},
\end{align*}
where the summation on $S$ is taken over all shifted reverse plane tableaux of shape $\lambda$.

Putting everything together, we obtain that
\begin{align}
 Q_{\lambda}(t^{\bullet}) &=\frac{1}{t^n}Q_{\lambda}(t,t^2,t^3,\ldots)
 =\frac{1}{t^n}\sum_T t^{|T|}  \\
 &=
 \frac{1}{t^n}\frac{\prod_{(i,j)\in \la^*}(t^i+t^{j})}
{\prod_{(i,j)\in \la^*}(1-t^{h^*_{ij}})}
=\frac{t^{n(\lambda)}\prod_{(i,j)\in \la^*}(1+t^{c_{ij}})}
{\prod_{(i,j)\in \la^*}(1-t^{h^*_{ij}})}.\notag
\end{align}
This completes the proof of Theorem~B.
\end{proof}

\begin{rem}
It follows from the proof above that Theorem B can be restated as
\begin{align*}
Q_\la (t^\bullet)
=\frac{\prod_{(i,j)\in\la^*}(t^{i-1} +t^{j-1})} {\prod_{(i,j)\in \la^*}(1-t^{h^*_{ij}})}.
\end{align*}
\end{rem}

\subsection{Another formula for $Q_{\lambda}(t^{\bullet})$}

For $k\in\N$, we set
$$
(a;t)_k=(1-a)(1-at)\cdots(1-at^{k-1}).
$$
Rosengren \cite[Proposition~3.1]{R} has obtained the following
formula for $Q_{\lambda}(t^{\bullet})$, starting from a Schur
function identity of Kawanaka:
\begin{align}
 Q_{\lambda}(t^{\bullet})
 =\prod_{1\leq i\leq \ell(\la)}\frac{(-1;t)_{\la_i}}{(t;t)_{\la_i}}
 \prod_{1\leq i<j\leq \ell(\la)}
 \frac{t^{\la_j}-t^{\la_i}}{1-t^{\la_i+\la_j}}.
 \label{eqn(R):t-Schur Q}
\end{align}
\begin{prop}\label{equiv}
The formula~(\ref{eqn:t-Schur Q}) is equivalent to~Rosengren's
formula (\ref{eqn(R):t-Schur Q}).
\end{prop}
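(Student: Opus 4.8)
The plan is to show that the two closed formulas \eqref{eqn:t-Schur Q} and \eqref{eqn(R):t-Schur Q} coincide by a direct term-by-term matching of the factors appearing on each side, organized according to the combinatorics of the shifted diagram $\la^*$ and its double partition $\widetilde{\la}$. The right-hand side of Rosengren's formula is a product over rows $i$ together with a product over pairs $i<j$, while the right-hand side of \eqref{eqn:t-Schur Q} (equivalently the form in the Remark, $\prod_{(i,j)\in\la^*}(t^{i-1}+t^{j-1})/\prod_{(i,j)\in\la^*}(1-t^{h^*_{ij}})$) is a product of local contributions over the cells of $\la^*$. So the main task is to split the cell products over $\la^*$ into row-diagonal and off-diagonal parts and identify each with the corresponding factor of \eqref{eqn(R):t-Schur Q}.

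First I would handle the numerator. Grouping the cells $(i,j)\in\la^*$ by row $i$, the factor $(t^{i-1}+t^{j-1})$ for $j=i,i+1,\dots,\la_i+i-1$ contributes $t^{(i-1)\la_i}\prod_{c=0}^{\la_i-1}(1+t^{c})\cdot(\text{adjustment for }j=i)$; more precisely the cell on the main diagonal $(i,i)$ gives $2t^{i-1}$ and the cells with $j>i$ pair up, row by row, to build $(-1;t)_{\la_i}=\prod_{c=0}^{\la_i-1}(1+t^{c})$ after factoring out the appropriate power of $t$. Collecting the powers of $t$ across all rows should reproduce exactly $t^{n(\la)}$ from \eqref{eqn:t-Schur Q} (using $n(\la)=\sum_i(i-1)\la_i$ and $\delta$-bookkeeping), matched against the $\prod_i (-1;t)_{\la_i}$ part of Rosengren. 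Second, for the denominator I would use the hook-length description of $h^*_{ij}$ in the double diagram $\widetilde{\la}$: the hooks of the cells strictly above the diagonal, i.e.\ the cells of $\la^*$, are well known to decompose — the diagonal cells $(i,i)$ have hook length $h^*_{ii}=\la_i+\la_i'{-}\,1$ in $\widetilde{\la}$ which equals $2\la_i-1$ after accounting for the double partition $\widetilde{\la}=(\la_1,\dots,\la_\ell\,|\,\la_1{-}1,\dots,\la_\ell{-}1)$, while the contents/hooks of the other cells of $\la^*$ reassemble into the remaining Pochhammer factors $(t;t)_{\la_i}$ and the products $(1-t^{\la_i+\la_j})$ over pairs. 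The cleanest route is to invoke the classical factorization of the hook-length multiset of a shifted/doubled diagram: $\{h^*_{ij}:(i,j)\in\la^*\}=\bigl\{1,2,\dots,\la_i\ (1\le i\le\ell)\bigr\}\ \sqcup\ \bigl\{\la_i+\la_j\ (1\le i<j\le\ell)\bigr\}\setminus\bigl\{\la_i-\la_j\ (1\le i<j\le\ell)\bigr\}$, which immediately converts $\prod_{(i,j)\in\la^*}(1-t^{h^*_{ij}})$ into $\prod_i (t;t)_{\la_i}\cdot\prod_{i<j}(1-t^{\la_i+\la_j})/(1-t^{\la_i-\la_j})$; comparing with the pair factor $(t^{\la_j}-t^{\la_i})/(1-t^{\la_i+\la_j})$ in \eqref{eqn(R):t-Schur Q} and absorbing the sign $t^{\la_j}-t^{\la_i}=-t^{\la_j}(1-t^{\la_i-\la_j})$ into the power-of-$t$ accounting finishes the identification.

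The main obstacle I expect is the bookkeeping of the powers of $t$ and signs: each of the three manipulations above (pairing cells in a row of the numerator, extracting $t^{\la_j}$ from $t^{\la_j}-t^{\la_i}$, and extracting $t$-powers when writing $h^*$-hooks as differences $\la_i-\la_j$) produces monomial prefactors, and one must check that all of them cancel to leave precisely the factor $t^{n(\la)}$ predicted by \eqref{eqn:t-Schur Q}, with the $2^{\ell(\la)}$ from the diagonal already built into $\prod_i(-1;t)_{\la_i}$ via the $c=0$ term $1+t^0=2$. A convenient sanity check is to specialize to a one-row strict partition $\la=(n)$, where both formulas reduce to $\prod_{c=0}^{n-1}(1+t^{c})/\prod_{c=1}^{n}(1-t^{c})$, and to a two-row case such as $\la=(2,1)$, to confirm the pair-factor matching. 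An alternative, essentially equivalent, route that avoids the hook-multiset lemma is to start directly from the principal specialization $Q_\la(t,t^2,\dots)$ of definition \eqref{SchurQ1}: the factor $\prod_{i<j}(z_i+z_j)/(z_i-z_j)$ under $z_i\mapsto t^i$ already displays the shape of Rosengren's pair product, and the sum over $S_\ell/S_{\ell-\ell}$ collapses trivially, so one obtains \eqref{eqn(R):t-Schur Q} almost immediately and then quotes Theorem~B for the other side — but this still requires the same $t$-power reconciliation, so the obstacle is the same either way.
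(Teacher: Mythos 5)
Your proposal takes essentially the same route as the paper: both rest on the Macdonald hook-multiset factorization for the shifted diagram, which gives $\prod_{(i,j)\in\la^*}(1-t^{h^*_{ij}}) = \prod_{i}(t;t)_{\la_i}\prod_{i<j}(1-t^{\la_i+\la_j})/(1-t^{\la_i-\la_j})$, combined with the observation that the contents along row $i$ of $\la^*$ are $0,1,\ldots,\la_i-1$, yielding $\prod_i(-1;t)_{\la_i}$. One small slip in your bookkeeping: since $\la_i>\la_j$ for $i<j$, one has $t^{\la_j}-t^{\la_i}=t^{\la_j}(1-t^{\la_i-\la_j})$ with no leading minus sign, so there is in fact no sign to absorb; this does not affect the overall argument.
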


\begin{proof}
Set $\ell =\ell(\la)$. It is known (cf. \cite[III, \S 8, Ex.~12]{Mac}) that in
the $i$th row of $\la^*$, the hook lengths $h^*_{ij}$ for $i\leq
j\leq\la_i+i-1$ are
$1,2,\ldots,\la_i,\la_i+\la_{i+1},\la_i+\la_{i+2},\ldots,\la_{i}+\la_{\ell}$
with exception
$\la_i-\la_{i+1},\la_i-\la_{i+2},\ldots,\la_{i}-\la_{\ell}$. Hence we
have
\begin{align}
\prod_{(i,j)\in\la^*}\frac{1}{1-t^{h^*_{ij}}}
=\frac{1}{\prod_{1\leq i\leq \ell}(t;t)_{\la_i}}\prod_{1\leq i<j\leq
\ell} \frac{1-t^{\la_i-\la_j}}{1-t^{\la_i+\la_j}}.
 \label{shifted hook}
\end{align}
The equivalence between~(\ref{eqn:t-Schur Q})
and~(\ref{eqn(R):t-Schur Q}) can now be deduced by applying
\eqref{shifted hook} and noting that the contents $c_{ij}$ for
$i\le j \le \la_i +i-1$ are $0,1,\ldots, \la_i-1$.
\end{proof}

\begin{rem}
By~(\ref{SchurQ}), $Q_{\la}(1^m)$ is
equal to the number of marked shifted Young tableaux of shape
$\lambda$ with fillings by letters $\le m$. On the other hand, it
follows from~\cite[Theorem~4]{Se1} that
$2^{\frac{\delta(\la)-\ell(\la)}{2}}Q_{\la}(1^m)$ gives the dimension
of the irreducible representation of the queer Lie superalgebra
$\mathfrak{q}(m)$ of highest weight $\la$.
\end{rem}

\section{The graded multiplicity in $\mathcal{C}_n\otimes
S^{*}V$} \label{sec:mult}

The goal of this section is to establish Theorem~A. In addition, a
tensor identity in Lemma~\ref{lem:equiv.tensor} allows us to
translate a multiplicity problem for $\HC$ to $\spin$, and vice versa
(see Proposition~\ref{prop:mult.equiv}).

\subsection{Some basics about superalgebras}

We shall recall some basic notions of superalgebras, referring the
reader to~\cite[Chapter 12]{Kle}. Let us denote by
$\bar{v}\in\mathbb{Z}_2$ the parity of a homogeneous vector $v$ of a
vector superspace. A superalgebra $\mathcal{A}$ is a
$\mathbb{Z}_2$-graded associative algebra.
An $\mathcal{A}$-module always means a $\mathbb{Z}_2$-graded
left $\mathcal{A}$-module in this paper. A homomorphism $f:V\rightarrow W$ of
$\mathcal{A}$-modules $V$ and $W$ means a linear map such that $
f(av)=(-1)^{\bar{f}\bar{a}}af(v).$  Note that this and other such
expressions only make sense for homogeneous $a, f$ and the meaning
for arbitrary elements is attained by extending linearly from
the homogeneous case. Let $V$ be a finite dimensional
$\mathcal{A}$-module. Let $\Pi
 V$ be the same underlying vector space but with the opposite
 $\mathbb{Z}_2$-grading. The new action of $a\in\mathcal{A}$ on $v\in\Pi
 V$ is defined in terms of the old action by $a\cdot
 v:=(-1)^{\bar{a}}av$.
Denote by $\mathcal{A}\text{-smod}$ the category of finite dimensional $\mathcal{A}$-modules.


Given two superalgebras $\mathcal{A}$ and $\mathcal{B}$,
the tensor product  $\mathcal{A}\otimes\mathcal{B}$
is naturally a superalgebra.
Suppose that $V$ is an $\mathcal{A}$-module and $W$ is a
$\mathcal{B}$-module. Then the tensor space $V\otimes W$ affords an $\mathcal{A}\otimes \mathcal{B}$-module,
denoted by $V\boxtimes W$, via
$$
(a\otimes b)(v\otimes w)=(-1)^{\bar{b}\bar{v}}av\otimes bw,\quad a\in \mathcal{A},
b\in \mathcal{B}, v\in V, w\in W.
$$

\subsection{Spin symmetric group algebras $\mathbb{C}S_n^-$
and Hecke-Clifford algebras $\HC$}\label{subsec:super equiv}

Recall that the spin symmetric group algebra $\C S_n^-$ is the algebra generated by
 $t_1,t_2,\ldots,t_{n-1}$ subject to the relations:
\begin{align}
t_i^2&=1,\quad 1\leq i\leq n-1\notag\\
t_it_{i+1}t_i&=t_{i+1}t_it_{i+1},\quad 1\leq i\leq n-2\notag\\
t_it_j&=-t_jt_i,\quad 1\leq i,j\leq n-1,|i-j|\geq 1.\notag
\end{align}
$\mathbb{C}S_n^-$ is a superalgebra with each $t_i$ being odd, for $1\leq i\leq n-1$.

Denote by $\mathcal{C}_n$
the Clifford superalgebra generated by the odd elements $c_1,\ldots,c_n$,
subject to the relations $c_i^2=1,c_ic_j=-c_jc_i$ for $1\leq i\neq
j\leq n$. Observe that $\mathcal{C}_n$ is a simple superalgebra
and there is a unique (up to isomorphism) irreducible
$\Cl$-module $U_n$.

Define the Hecke-Clifford
algebra $\HC=\mathcal{C}_n\rtimes\C S_n$ to be the
superalgebra generated by odd elements $c_1,\ldots,c_n$ and even
elements $s_1,\ldots,s_{n-1}$, subject to the relations:
\begin{align*}
s_i^2&=1, s_is_j=s_js_i,\quad 1\leq i,j\leq n-1, |i-j|>1,\\
s_is_{i+1}s_i&=s_{i+1}s_is_{i+1},\quad 1\leq i\leq n-2,\\
c_i^2&=1,c_ic_j=-c_jc_i,\quad 1\leq i\neq
j\leq n,\\
s_ic_i&=c_{i+1}s_i, s_ic_j=c_js_i,\quad 1\leq i,j\leq n-1, j\neq i,i+1.
\end{align*}
There is a superalgebra isomorphism  (cf.
\cite{Se1, Ya}):
\begin{align}
 \label{map:isorm.HC}
\begin{split}
\C S_n^-\otimes\mathcal{C}_n&\longrightarrow\HC \\
c_i&\mapsto c_i, \quad 1\leq i\leq n,    \\
t_j&\mapsto \frac{1}{\sqrt{-2}}s_j(c_j-c_{j+1}),\quad 1\leq j\leq n-1.
\end{split}
\end{align}
The two exact functors
\begin{eqnarray*}
\mathfrak{F}_n :=-\boxtimes U_n:
& \C
S_n^-\text{-smod} \rightarrow\HC\text{-smod},\notag\\
\mathfrak{G}_n :={\rm Hom}_{\mathcal{C}_n}(U_n,-):
& \HC
\text{-smod} \rightarrow\C S_n^-\text{-smod}  \notag
\end{eqnarray*}
define Morita super-equivalence between the superalgebras $\HC$ and $\spin$
(cf. Kleshchev \cite[Proposition~13.2.2]{Kle} for precise details).

It is known \cite{Jo2, Se1, St2} (cf. \cite{Kle}) that for each
strict partition $\la$ of $n$, there exists an irreducible
$\HC$-module $D^{\la}$ and $\{D^{\la}~|~\la\vdash_s n\}$
forms a complete set of non-isomorphic irreducible
$\HC$-modules.
We have a complete set of
non-isomorphic irreducible $\spin$-modules
$\{D^\la_-~|~\la\vdash_sn\}$,
and by \cite[Proposition~13.2.2]{Kle},
\begin{align}
\mathfrak{G}_n(D^{\la}) = \left\{\begin{array}{ll}
 D^\la_-,
 & \text{ if }n\text{ or } \ell(\la)\text{ is even},\\
 D^\la_-\bigoplus\Pi D^\la_-,
& \text{ otherwise}.
\end{array}
\right. \label{corresp:spin HC}
\end{align}

Denote the trivial representation  by ${\bf 1}$
and the sign representation of $S_n$ by   ${\rm sgn}$. Note that
$\mathcal{C}_n\cong{\rm ind}^{\HC}_{\mathbb{C}S_n}{\bf
1}$ is the irreducible $\HC$-module $D^{(n)}$
\cite[Lemma~22.2.4]{Kle}. It follows from
~(\ref{corresp:spin HC})~
that the  irreducible
$\mathbb{C}S_n^-$-module $\mathcal{B}_n:=D^{(n)}_-$ satisfies that
\begin{align}
{\rm Hom}_{\mathcal{C}_n}(U_n, \mathcal{C}_n)\cong\left\{
\begin{array}{ll}
\mathcal{B}_n,\quad &\text{ if } n \text{ is even},\\
\mathcal{B}_n\bigoplus\Pi\mathcal{B}_n, \quad &\text{ if } n \text{ is odd}.
\end{array}
\right.\label{basic spin}
\end{align}
It can be shown that the $\mathbb{C}S_n^-$-module
$\mathcal{B}_n$ coincides with the basic spin representation $L_n$
defined in~\cite[2C]{Jo1}.

%
%
\subsection{The multiplicity problem of $\HC$ vs $\spin$}

Given a $\C S_n$-module $M$ and a $\C S_n^-$-module $E$,
the tensor product $E\otimes M$ affords a $\C S_n^-$-module as follows:
\begin{align}
t_j(u\otimes x)=(t_ju)\otimes (s_jx),
\quad 1\leq j\leq n-1, u\in E, x\in M.\label{eqn:spin.tensor sym}
\end{align}
Meanwhile, the tensor product $F\otimes M$ of a $\HC$-module $F$
and a $\C S_n$-module $M$ naturally affords a $\HC$-module
with
\begin{align}
c_i(u\otimes x)=(c_iu)\otimes x,\qquad s_j(u\otimes x)=(s_ju)\otimes (s_jx),
\label{eqn:HC.tensor sym}
\end{align}
for $1\leq i\leq n, 1\leq j\leq n-1, u\in F, x\in M.$

\begin{lem} [A tensor identity]
 \label{lem:equiv.tensor}
Suppose that $M$ is a $\C S_n$-module.
Then we have an isomorphism of $\C S_n^-$-modules:
$\mathfrak G_n (\Cl) \otimes M \cong \mathfrak G_n (\Cl \otimes M)$; that is,
\begin{align}
{\rm Hom}_{\mathcal{C}_n}(U_n,\mathcal{C}_n)\otimes M\cong{\rm
Hom}_{\mathcal{C}_n}(U_n, \mathcal{C}_n\otimes M) .\notag
\end{align}
\end{lem}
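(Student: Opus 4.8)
The statement is that the functor $\mathfrak{G}_n = \mathrm{Hom}_{\mathcal{C}_n}(U_n,-)$ intertwines tensoring by a $\C S_n$-module $M$ on the $\spin$-side with tensoring by $M$ on the $\HC$-side, i.e.\ $\mathfrak{G}_n(\Cl)\otimes M \cong \mathfrak{G}_n(\Cl\otimes M)$ as $\spin$-modules. The plan is to exhibit an explicit linear isomorphism at the level of underlying vector spaces and then check it is $\C S_n^-$-equivariant for the actions \eqref{eqn:spin.tensor sym} and \eqref{eqn:HC.tensor sym} (composed with the isomorphism \eqref{map:isorm.HC}). The natural candidate map
\[
\Phi:\ \mathrm{Hom}_{\mathcal{C}_n}(U_n,\Cl)\otimes M\ \longrightarrow\ \mathrm{Hom}_{\mathcal{C}_n}(U_n,\Cl\otimes M),
\qquad \Phi(f\otimes x)(u) = f(u)\otimes x,
\]
where on the right $U_n$ acts on $\Cl\otimes M$ through the first factor only. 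One first checks $\Phi(f\otimes x)$ really is a $\mathcal{C}_n$-module homomorphism: since the $\mathcal{C}_n$-action on $\Cl\otimes M$ is $c_i(u'\otimes x)=(c_iu')\otimes x$, this is immediate (with the Koszul sign, which here is trivial because $M$ carries no odd structure as a $\mathcal{C}_n$-module — $M$ sits in the "even" slot).

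The second step is to see $\Phi$ is a vector-space isomorphism. Because $\Cl\cong U_n^{\oplus d}$ (or $U_n^{\oplus d}\oplus \Pi U_n^{\oplus d}$ depending on parity) as $\mathcal{C}_n$-modules, and $\mathcal{C}_n$ is a simple superalgebra, $\mathrm{Hom}_{\mathcal{C}_n}(U_n,-)$ is exact and commutes with finite direct sums; more concretely, $\Cl\otimes M \cong (\Cl\otimes \C)\otimes_\C M$ and the $\mathcal{C}_n$-action only touches the $\Cl$ factor, so $\mathrm{Hom}_{\mathcal{C}_n}(U_n,\Cl\otimes M)\cong\mathrm{Hom}_{\mathcal{C}_n}(U_n,\Cl)\otimes_\C M$ by a standard tensor-identity argument (pick a basis of $M$, or invoke that $\mathrm{Hom}_{\mathcal{C}_n}(U_n,-)$ is a right exact additive functor on finitely generated modules). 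Either way $\Phi$ is a bijection, and one should remark that it respects the $\Z_2$-grading.

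The substantive step — and the one I expect to be the main obstacle — is verifying $\C S_n^-$-equivariance, because the $t_j$ act through the "mixed" formula $t_j\mapsto \tfrac{1}{\sqrt{-2}}s_j(c_j-c_{j+1})$ under \eqref{map:isorm.HC}, and on a tensor product $F\otimes M$ the element $s_j$ acts diagonally while $c_j,c_{j+1}$ act only on $F$. Concretely, on $\Cl\otimes M$ the operator $\tfrac{1}{\sqrt{-2}}s_j(c_j-c_{j+1})$ sends $u\otimes x\mapsto \tfrac{1}{\sqrt{-2}}\big(s_j(c_j-c_{j+1})u\big)\otimes(s_jx)$, i.e.\ it is $(t_j^{\Cl})\otimes s_j$ where $t_j^{\Cl}$ denotes the image of $t_j$ acting on $\Cl$ alone. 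Passing through $\mathfrak{G}_n$, the induced $t_j$ on $\mathrm{Hom}_{\mathcal{C}_n}(U_n,\Cl\otimes M)$ is then post-composition with $t_j^{\Cl}\otimes s_j$, whereas on $\mathrm{Hom}_{\mathcal{C}_n}(U_n,\Cl)\otimes M$ the $\spin$-action \eqref{eqn:spin.tensor sym} is $t_j(f\otimes x)=(t_j^{\Cl}\!\circ f)\otimes(s_jx)$. Chasing $\Phi$ around, both sides send $f\otimes x$ to the homomorphism $u\mapsto \big(t_j^{\Cl}f(u)\big)\otimes s_jx$, so the square commutes; one only needs to be careful that $t_j^{\Cl}$ is a $\mathcal{C}_n$-semilinear (odd) operator, so that post-composing a $\mathcal{C}_n$-homomorphism with it again lands in $\mathrm{Hom}_{\mathcal{C}_n}(U_n,-)$ after the usual parity shift built into \eqref{eqn:spin.tensor sym}. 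Tracking these signs carefully is the only delicate point; everything else is formal.
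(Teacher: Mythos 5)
Your proposal is correct and follows essentially the same route as the paper: define $\Phi(f\otimes x)(u)=f(u)\otimes x$, observe it is a vector-space isomorphism (the paper simply notes injectivity plus a dimension count, while you appeal to exactness of $\mathrm{Hom}_{\mathcal{C}_n}(U_n,-)$; either is fine), and verify $\spin$-equivariance by chasing $t_j\mapsto\tfrac{1}{\sqrt{-2}}s_j(c_j-c_{j+1})$ through \eqref{eqn:spin.tensor sym} and \eqref{eqn:HC.tensor sym}. The sign bookkeeping you flag at the end is in fact harmless because $M$ is purely even, so the Koszul signs are trivial exactly where they could cause trouble — the paper carries out the same computation without explicit signs for the same reason.
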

\begin{proof}
Observe that by~(\ref{map:isorm.HC}) the action of $\spin$ on
${\rm Hom}_{\mathcal{C}_n}(U_n, \mathcal{C}_n\otimes M)$ is given
by
\begin{align}
(t_j*f)(u)=(\frac{1}{\sqrt{-2}}s_j(c_j-c_{j+1}))(f(u)), \quad
f\in{\rm Hom}_{\mathcal{C}_n}(U_n, \mathcal{C}_n\otimes M), u\in
U_n\label{eqn:action1}
\end{align}
while by~(\ref{eqn:spin.tensor sym}) the $\spin$-module structure
of ${\rm Hom}_{\mathcal{C}_n}(U_n,\mathcal{C}_n)\otimes M$ is given by
\begin{align}
t_j*(f\otimes x)=(t_j*f)\otimes(s_j x),\quad f\in{\rm
Hom}_{\mathcal{C}_n}(U_n,\mathcal{C}_n), x\in
M.\label{eqn:action2}
\end{align}
Define a map
\begin{align}
\phi:{\rm Hom}_{\mathcal{C}_n}(U_n,\mathcal{C}_n)\otimes M&\longrightarrow
{\rm Hom}_{\mathcal{C}_n}(U_n, \mathcal{C}_n\otimes M),\notag\\
f\otimes x&\mapsto (u\mapsto f(u)\otimes x).\notag
\end{align}
Clearly $\phi$ is injective and thus an isomorphism of vector spaces by a dimension
counting argument. It remains to show that $\phi$ is a
$\spin$-module homomorphism. Indeed, for  $u\in U_n$, $f\in{\rm
Hom}_{\mathcal{C}_n}(U_n,\mathcal{C}_n)$ and $ x\in M$, we have
\begin{align*}
\phi(t_j*(f\otimes x))(u)
&=\phi(t_j*f\otimes s_jx)(u) \quad  \text{ by}~~(\ref{eqn:action2})\notag\\
&=(t_j*f)(u)\otimes s_jx\notag\\
&=((\frac{1}{\sqrt{-2}}s_j(c_j-c_{j+1}))f(u))\otimes s_jx\notag\\
&=(\frac{1}{\sqrt{-2}}s_j(c_j-c_{j+1}))(f(u)\otimes x)
 \quad \text{ by~}(\ref{eqn:HC.tensor sym})\notag\\
&=(t_j*\phi(f\otimes x))(u)  \quad \text{ by}~(\ref{eqn:action1}).
\end{align*}
\end{proof}

\begin{prop}\label{prop:mult.equiv}
Suppose that $M$ is a $\C S_n$-module. Let $m_{\la}$ and
$m_{\la}^-$ be the multiplicities of $D^{\la}$ and
$D^\la_-$ in the $\HC$-module $\Cl\otimes M$ and
$\spin$-module $\mathcal{B}_n\otimes M$, respectively. Then,
\begin{align*}
m_{\la}^- =\left\{\begin{array}{ll}
m_{\la}, & \text{ if }n \text{ is even},\\
m_{\la}, & \text{ if }n \text{ is odd} \text{ and } \ell(\la) \text{ is odd},\\
\frac12 m_{\la}, & \text{ if }n \text{ is odd} \text{ and } \ell(\la)
\text{ is even}.
\end{array}
\right.
\end{align*}
\end{prop}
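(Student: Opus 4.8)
The plan is to deduce this multiplicity comparison directly from the tensor identity of Lemma~\ref{lem:equiv.tensor} together with the structural description of the functor $\mathfrak G_n$ recorded in \eqref{corresp:spin HC} and \eqref{basic spin}. First I would apply $\mathfrak G_n={\rm Hom}_{\mathcal C_n}(U_n,-)$ to the $\HC$-module $\Cl\otimes M$ and invoke Lemma~\ref{lem:equiv.tensor} to identify $\mathfrak G_n(\Cl\otimes M)$ with $\mathfrak G_n(\Cl)\otimes M=\mathcal B_n\otimes M$ (using $\mathfrak G_n(\Cl)=\mathcal B_n$ when $n$ is even, and $\mathfrak G_n(\Cl)=\mathcal B_n\oplus\Pi\mathcal B_n$ when $n$ is odd, from \eqref{basic spin}). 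Thus the $\spin$-module $\mathcal B_n\otimes M$ (resp.\ its double) is obtained by applying $\mathfrak G_n$ to $\Cl\otimes M$.

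Next I would compare composition multiplicities on both sides. If $\Cl\otimes M\cong\bigoplus_\la (D^\la)^{\oplus m_\la}$ as $\HC$-modules (up to parity shifts, which do not affect multiplicities), then applying the exact functor $\mathfrak G_n$ and using \eqref{corresp:spin HC} gives
\begin{align*}
\mathfrak G_n(\Cl\otimes M)\cong\bigoplus_{\la}\mathfrak G_n(D^\la)^{\oplus m_\la}
\cong\bigoplus_{\la}
\begin{cases}
(D^\la_-)^{\oplus m_\la}, & n\text{ or }\ell(\la)\text{ even},\\
(D^\la_-\oplus\Pi D^\la_-)^{\oplus m_\la}, & n\text{ and }\ell(\la)\text{ odd}.
\end{cases}
\end{align*}
On the other hand, this same module equals $\mathcal B_n\otimes M$ when $n$ is even, and $(\mathcal B_n\otimes M)\oplus\Pi(\mathcal B_n\otimes M)$ when $n$ is odd, by the previous paragraph. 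Matching the multiplicity of $D^\la_-$ (counting $D^\la_-$ and $\Pi D^\la_-$ together, since these are isomorphic when $D^\la_-$ is of type $\texttt Q$, which happens precisely in the relevant odd cases) then yields the three cases of the proposition: for $n$ even, $m_\la^-=m_\la$ directly; for $n$ odd and $\ell(\la)$ odd, the left side contributes $2m_\la$ copies of $D^\la_-$ and the right side contributes $2m_\la^-$, giving $m_\la^-=m_\la$; for $n$ odd and $\ell(\la)$ even, the left side contributes $m_\la$ copies and the right side $2m_\la^-$, giving $m_\la^-=\tfrac12 m_\la$.

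The main obstacle is bookkeeping the parity shifts $\Pi$ and the type ($\texttt M$ versus $\texttt Q$) of the modules $D^\la_-$ correctly, so that ``multiplicity'' is counted consistently on both sides; in particular one must be careful that $D^\la_-\oplus\Pi D^\la_-$ contributes $2$ (not $1$) to the multiplicity of the isomorphism class $D^\la_-$, and that this doubling is exactly compensated (or not) by the doubling $\mathcal B_n\rightsquigarrow\mathcal B_n\oplus\Pi\mathcal B_n$ on the other side depending on the parity of $\ell(\la)$. I expect this to be a short argument once these conventions are pinned down, essentially an exercise in chasing \eqref{corresp:spin HC}, \eqref{basic spin}, and Lemma~\ref{lem:equiv.tensor}; no hard new input is needed.
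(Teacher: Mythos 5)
Your argument is correct and is essentially the same as the paper's: both decompose $\Cl\otimes M$ and $\mathcal B_n\otimes M$ into irreducibles, apply the exact functor $\mathfrak G_n$ together with the tensor identity of Lemma~\ref{lem:equiv.tensor} and \eqref{basic spin} to identify $\mathfrak G_n(\Cl\otimes M)$ with $\mathcal B_n\otimes M$ (or its double), and then read off the multiplicities by comparison via \eqref{corresp:spin HC}. Your explicit parity bookkeeping in the three cases is exactly the final step the paper compresses into ``comparing multiplicities,'' so no new idea is needed.
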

\begin{proof}
It follows by definition that
\begin{align}
\mathcal{C}_n\otimes M\cong\bigoplus_{\la\vdash_s n}m_{\la} D^{\la},
 \qquad
\mathcal{B}_n\otimes M\cong\bigoplus_{\la\vdash_s n}m_{\la}^-
D^\la_-\label{eqn:Bl M}.
\end{align}
By~(\ref{basic spin}) and Lemma~\ref{lem:equiv.tensor}, we have
\begin{align*}
\mathfrak{G}_n(\mathcal{C}_n\otimes M)\cong\left\{
\begin{array}{ll}
\mathcal{B}_n\otimes M,\quad &\text{ if } n \text{ is even},\\
2\mathcal{B}_n\otimes M,\quad &\text{ if } n \text{ is odd}.
\end{array}
\right.
\end{align*}
This together with (\ref{eqn:Bl M}) implies
that
\begin{align*}
\bigoplus_{\la\vdash_s n}m_{\la} \mathfrak{G}_n(D^{\la})\cong
\left\{\begin{array}{ll}
\bigoplus_{\la\vdash_s n}m_{\la}^- D^\la_-, & \text{ if }n \text{ is even}, \\
\bigoplus_{\la\vdash_s n}2m_{\la}^- D^\la_-, & \text{ if }n
\text{ is odd}.
\end{array}
\right.
\end{align*}
The proposition now follows by comparing the multiplicities of
$D^\la_-$ on both sides  and using~(\ref{corresp:spin HC}).
\end{proof}

\subsection{Proof of Theorem~A}

The symmetric group $S_n$ acts naturally on the ($\Z_+$-graded)
symmetric algebra  on $V =\C^n$:
$$
S^*V =\bigoplus_{j \ge 0} S^jV.
$$
As $S_n$-modules, we will identify $ S^*V$ with the algebra
of polynomials in $n$ variables over $\C$. Note that
$\mathcal{C}_n\otimes S^* V={\rm ind}^{\HC}_{\C
S_n} S^* V$ is naturally a $\Z_+$-graded
$\HC$-module, with the grading inherited from the one on
$ S^* V$.

\begin{lem}  \label{lem:induced}
We have the following isomorphism of
$\HC$-modules for $j \ge 0$:
\begin{align}
\mathcal{C}_n\otimes S^jV\cong \bigoplus_{\nu\models
n,n(\nu)=j} {\rm ind}^{\HC}_{\C S_{\nu}} {\bf 1}. \notag
\end{align}
\end{lem}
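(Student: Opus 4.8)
The plan is to decompose the graded $S_n$-module $S^*V$ into induced modules from Young subgroups, and then apply induction in stages. The key observation is the classical fact that, as a graded $S_n$-module, the symmetric algebra $S^*V$ on the permutation module $V = \C^n$ decomposes as a direct sum of modules induced from the trivial representation of Young subgroups: for each composition $\nu \models n$, the monomials whose exponent multiset matches $\nu$ span a submodule isomorphic to $\mathrm{ind}^{\C S_n}_{\C S_\nu} \mathbf{1}$, sitting in degree equal to the sum of the exponents. To carry this out cleanly I would first record the $S_n$-module isomorphism in a fixed degree $j$,
\begin{align}
S^jV \cong \bigoplus_{\nu} \mathrm{ind}^{\C S_n}_{\C S_\nu} \mathbf{1}, \notag
\end{align}
where $\nu$ runs over partitions (equivalently, over compositions up to reordering, with the understanding that each orbit type is counted once) whose parts sum to $j$; the grading-degree bookkeeping is what forces the condition relating $j$ to $\nu$.

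The second step is to match the degree. A monomial $x_1^{a_1}\cdots x_n^{a_n}$ has total degree $\sum a_i$, and its $S_n$-orbit is determined by the multiset $\{a_1,\dots,a_n\}$. Sorting this multiset into weakly decreasing order gives a partition of $j$ into at most $n$ parts; padding with zeros, the stabilizer is the Young subgroup $S_\nu$ where $\nu$ records the multiplicities of the distinct values. The subtlety is that the statement of the lemma indexes the sum by compositions $\nu \models n$ with $n(\nu) = j$ (using $n(\nu) = \sum_{i\ge 1}(i-1)\nu_i$ from \eqref{n lambda}), not by partitions of $j$. So I would need to verify the combinatorial bijection: compositions of $n$ with $n(\nu) = j$ correspond bijectively to $S_n$-orbits of monomials of degree $j$, i.e.\ to partitions of $j$ with at most $n$ parts, in a way that sends $\nu$ to the Young subgroup $S_\nu$. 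Concretely, if a monomial orbit corresponds to exponent-multiplicities $\nu = (\nu_1, \nu_2, \dots)$ (so $\nu_k$ is the number of variables raised to the power "the $k$-th largest value"), then its degree is $\sum_k (\text{value}_k)\nu_k$, and one checks this equals $\sum_i (i-1)\nu_i$ after the natural reindexing — this is exactly the identity that makes the condition $n(\nu)=j$ the right one.

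The third and final step is to apply the Clifford-induction functor $\mathcal{C}_n \otimes - = \mathrm{ind}^{\HC}_{\C S_n}(-)$, which is exact (being induction along an inclusion of algebras) and hence commutes with finite direct sums, together with transitivity of induction: $\mathrm{ind}^{\HC}_{\C S_n} \circ \mathrm{ind}^{\C S_n}_{\C S_\nu} = \mathrm{ind}^{\HC}_{\C S_\nu}$, where $\C S_\nu \hookrightarrow \C S_n \hookrightarrow \HC$. Note $\mathcal{C}_n \otimes S^jV$ carries the $\HC$-module structure of \eqref{eqn:HC.tensor sym}, which is precisely the structure making it $\mathrm{ind}^{\HC}_{\C S_n} S^jV$; this identification is already asserted in the paragraph preceding the lemma, so I may invoke it. Combining the three steps gives
\begin{align}
\mathcal{C}_n \otimes S^jV \cong \mathrm{ind}^{\HC}_{\C S_n}\Big(\bigoplus_{\nu \models n,\, n(\nu)=j} \mathrm{ind}^{\C S_n}_{\C S_\nu} \mathbf{1}\Big) \cong \bigoplus_{\nu \models n,\, n(\nu)=j} \mathrm{ind}^{\HC}_{\C S_\nu} \mathbf{1}, \notag
\end{align}
as desired.

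I expect the main obstacle to be purely bookkeeping rather than structural: getting the indexing of the sum to agree exactly with "$\nu \models n$ with $n(\nu) = j$." One must be careful about whether $\nu$ is read as a partition or an ordered composition, about trailing zeros (the number of parts of $\nu$ must be made to equal $n$, with zero parts allowed, so that $S_\nu$ is a genuine Young subgroup of $S_n$), and about the precise correspondence between "exponents appearing in a monomial" and "the composition $\nu$." Once the convention is pinned down — a monomial orbit of degree $j$ is recorded by listing, for $k = 1, 2, \dots$, the number $\nu_k$ of variables carrying the $k$-th distinct exponent value in decreasing order, so that $\sum_i (i-1)\nu_i$ recovers the degree and $\sum_i \nu_i = n$ — everything else is routine. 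I would present the monomial-basis argument in detail and treat the transitivity/exactness of induction as standard.
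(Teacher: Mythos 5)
Your proposal follows the same strategy as the paper: decompose $S^jV$ as a direct sum of permutation modules $\mathrm{ind}^{\C S_n}_{\C S_\nu}\mathbf 1$ coming from $S_n$-orbits of monomials, match the grading degree via the condition $n(\nu)=j$, then apply the exact induction functor $\Cl\otimes(-)=\mathrm{ind}^{\HC}_{\C S_n}(-)$ and transitivity of induction. The structural part of the argument is correct and is exactly what the paper does.

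The combinatorial convention you commit to in your final paragraph, however, is wrong — and it is precisely the bookkeeping you flag as "the main obstacle." You set $\nu_k$ to be the number of variables carrying ``the $k$-th distinct exponent value in decreasing order'' and assert that $\sum_i(i-1)\nu_i$ then recovers the degree. It does not: with $n=6$ and exponent multiset $\{3,1,1,1,0,0\}$ (degree $6$), your prescription gives $\nu=(1,3,2)$, for which $n(\nu)=0\cdot1+1\cdot3+2\cdot2=7\neq6$. The correct convention is to set $\nu_k$ equal to the number of variables with exponent \emph{exactly} $k-1$, \emph{including} entries $\nu_k=0$ for exponent values that are skipped; in the example this is $\nu=(2,3,0,1)$, with $n(\nu)=6$ as required. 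Equivalently, as the paper writes, the orbit representative indexed by $\nu$ is $(x_1\cdots x_{\nu_1})^0(x_{\nu_1+1}\cdots x_{\nu_1+\nu_2})^1(x_{\nu_1+\nu_2+1}\cdots)^2\cdots$; with that choice the map from compositions of $n$ (with internal zeros allowed) satisfying $n(\nu)=j$ to $S_n$-orbits of degree-$j$ monomials is a genuine bijection, and the stabilizer of the representative is $S_\nu$. Your phrase ``multiplicities of the distinct exponent values'' both loses the $n(\nu)=j$ condition and, in general, assigns the wrong $\nu$ to a given orbit. Once you replace it with ``multiplicities of the exponents $0,1,2,\ldots$, zeros allowed,'' the rest of your argument is routine and agrees with the paper's proof.
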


\begin{proof}
Identify $S^*V \equiv \C[x_1,\ldots, x_n]$.
The definition \eqref{n lambda} of  $n(\nu)$ makes sense for any composition $\nu$.
The representatives of
the $S_n$-orbits
on the set of all monomials of degree $j$ in $\C[x_1,\ldots, x_n]$
can be chosen to be
$$
(x_1\ldots x_{\nu_1})^0 (x_{\nu_1+1} \ldots x_{\nu_1+\nu_2})^1
 (x_{\nu_1+\nu_2+1} \ldots x_{\nu_1+\nu_2+\nu_3})^2 \ldots
$$
where $\nu =(\nu_1, \nu_2, \ldots)$ runs over all compositions of $n$ such that $n(\nu)=j$.
Then, as $\mathbb{C}S_n$-modules,
$$
S^jV\cong\bigoplus_{\nu\models n,n(\nu)=j} {\rm ind}^{\C
S_n}_{\C S_{\nu}} {\bf 1}.
$$
Hence, as $\HC$-modules, we have
\begin{align*}
\mathcal{C}_n\otimes S^jV
\cong \bigoplus_{\nu\models n,n(\nu)=j}
{\rm ind}^{\HC}_{\C S_n}  {\rm ind}^{\C S_n}_{\C S_{\nu}} {\bf 1}
\cong \bigoplus_{\nu\models
n,n(\nu)=j} {\rm ind}^{\HC}_{\C S_{\nu}} {\bf 1}.
\end{align*}
\end{proof}

Below, we shall denote by $[u^n] f(u)$ the coefficient of $u^n$
of a formal power series $f(u)$ in a variable $u$. We are
ready to prove Theorem~A in Introduction.

\begin{proof}[Proof of Theorem~A]
Denote by $K(\HC\text{-smod})$ the complexified Grothendieck group of the
category $\HC\text{-smod}$. Recall the definition of the
algebra $\Gamma_\C$ from~(\ref{Gamma}).
There exists
an isomorphism called the characteristic map~\cite{Se1, St2, Jo2}
\begin{align*}
{\rm ch}: \bigoplus_{n\geq
0}K(\HC\text{-smod})&\longrightarrow\Gamma_\C,
\end{align*}
which sends $D^{\la}$ to
$2^{-\frac{\ell(\la)-\delta(\la)}{2}}Q_{\la}$ for all strict
partitions $\la$. It is known that
\begin{align}
{\rm ch} \big ( {\rm ind}^{\HC}_{\mathbb{C}S_{\nu}}{\bf
1}\big ) = q_{\nu},  \quad \forall \nu \models n.   \label{char.
map}
\end{align}

By~Lemma~\ref{lem:induced}, (\ref{char. map}) and (\ref{gen.fun.qr}), we have
\begin{align}
\ds \sum_{j}t^j\text{ch}(\mathcal{C}_n\otimes S^jV)
&=\sum_jt^j\sum_{\nu\models n, n(\nu)=j}
q_{\nu}(z)\notag\\
&=\sum_{\nu\models n}
\prod _{r\geq 0}q_{\nu_{r+1}}(z)(t^{r})^{\nu_{r+1}}\notag \\
&=[u^n] \prod_{r\geq 0}\sum_{s\geq 0}q_s(z)(t^{r}u)^s\notag\\
&=[u^n] \prod_{i\geq 1,r\geq 0}\frac{1+z_it^ru}{1-z_it^ru}.
 \notag
\end{align}
Recall the notation $Q_{\lambda}(t^{\bullet})$ from the introduction.
It follows from~(\ref{Cauchy identity}) that
\begin{align}
\ds \prod_{i\geq 1,j\geq 0}\frac{1+z_it^ju}{1-z_it^ju}
=\sum_{\lambda:\text{ strict}}2^{-\ell(\lambda)}u^{|\lambda|}Q_{\lambda}(t^{\bullet})Q_{\lambda}(z).\notag
\end{align}
Hence
\begin{align}
\ds \sum_{j}t^j\text{ch}(\mathcal{C}_n\otimes S^jV)=
\sum_{\lambda\vdash_s
n}2^{-\ell(\lambda)}Q_{\lambda}(t^{\bullet})Q_{\lambda}(z).\notag
\end{align}
Since the characteristic map ch is an isomorphism, we have an isomorphism of $\HC$-modules:
\begin{align*}
\mathcal{C}_n\otimes S^* V\cong\bigoplus_{\lambda\vdash_s n}
2^{-\frac{\delta(\lambda)+\ell(\lambda)}{2}}Q_{\lambda}(t^{\bullet})
D^{\lambda}.
\end{align*}
This together with Theorem~B implies Theorem~A.
\end{proof}

The following corollary follows directly from Theorem~A
and Proposition~\ref{prop:mult.equiv}.
\begin{cor}
The graded multiplicity of $D^\la_-$ in the $\spin$-module
$\mathcal{B}_n\otimes S^* V$ is given by \eqref{hook} unless $n$ is
odd and $\ell(\la)$ is even; in this case, the graded multiplicity
is
$$
2^{-\frac{\ell(\lambda)}{2}-1}
\frac{t^{n(\lambda)}\prod_{(i,j)\in \la^*}(1+t^{c_{ij}})}
{\prod_{(i,j)\in \la^*}(1-t^{h^*_{ij}})}.
$$
\end{cor}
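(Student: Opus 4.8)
The plan is to bootstrap the corollary directly from Theorem~B (equivalently Theorem~A, via \eqref{eqn:t-Schur Q}) together with the multiplicity dictionary in Proposition~\ref{prop:mult.equiv}, specialized to the $S_n$-module $M = S^*V$. First I would observe that Theorem~A supplies, for each strict partition $\la$ of $n$, the graded multiplicity $m_\la(t)$ of $D^\la$ in $\Cl \otimes S^*V$, namely $2^{-\frac{\ell(\la)+\delta(\la)}{2}}$ times the expression in \eqref{eqn:t-Schur Q}. Since the graded module $\Cl \otimes S^*V = \bigoplus_j \Cl \otimes S^jV$ decomposes degree by degree, and Proposition~\ref{prop:mult.equiv} is stated for an arbitrary $\C S_n$-module $M$ (in particular applies to each $S^jV$ and hence, by summing against $t^j$, to the graded module), the graded multiplicity $m_\la^-(t)$ of $D^\la_-$ in $\mathcal{B}_n \otimes S^*V$ is obtained from $m_\la(t)$ by the same case division on the parities of $n$ and $\ell(\la)$.

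The three cases then read off mechanically. If $n$ is even, then $m_\la^-(t) = m_\la(t)$, which by Theorem~A is exactly \eqref{hook}. If $n$ is odd and $\ell(\la)$ is odd, again $m_\la^-(t) = m_\la(t) = $ \eqref{hook}. The only remaining case is $n$ odd and $\ell(\la)$ even, where Proposition~\ref{prop:mult.equiv} gives $m_\la^-(t) = \tfrac12 m_\la(t)$. Here $\delta(\la) = 0$ since $\ell(\la)$ is even, so $m_\la(t) = 2^{-\frac{\ell(\la)}{2}} t^{n(\la)} \prod_{(i,j)\in\la^*}(1+t^{c_{ij}}) / \prod_{(i,j)\in\la^*}(1-t^{h^*_{ij}})$ by \eqref{hook}, and multiplying by $\tfrac12$ converts the prefactor $2^{-\frac{\ell(\la)}{2}}$ into $2^{-\frac{\ell(\la)}{2}-1}$, yielding precisely the displayed formula in the corollary.

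There is really no obstacle here: the content is entirely in Theorem~A and Proposition~\ref{prop:mult.equiv}, and this corollary is just their composition. The only point that warrants a sentence of care is the passage from the ungraded statement of Proposition~\ref{prop:mult.equiv} to the graded one — i.e.\ that the grading on $\Cl \otimes S^*V$ induced from $S^*V$ is respected by the super-equivalence functor $\mathfrak{G}_n = {\rm Hom}_{\Cl}(U_n, -)$, so that one may apply the proposition in each fixed degree $j$ and then assemble the generating functions in $t$. This is immediate because $\mathfrak{G}_n$ is additive and the $\Z_+$-grading is just a direct sum decomposition as $\HC$-modules (each $\Cl \otimes S^jV$ is an honest $\HC$-submodule). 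Hence the proof is a two-line deduction: invoke Theorem~A for $m_\la(t)$, invoke Proposition~\ref{prop:mult.equiv} with $M = S^*V$, and record the three cases, simplifying the exponent $2^{-\frac{\ell(\la)+\delta(\la)}{2}-1} = 2^{-\frac{\ell(\la)}{2}-1}$ in the last one using $\delta(\la)=0$.
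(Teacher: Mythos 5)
Your proof is correct and matches the paper's own argument exactly: the paper states that the corollary "follows directly from Theorem~A and Proposition~\ref{prop:mult.equiv}," which is precisely the two-line deduction you carry out, with the same case split on the parities of $n$ and $\ell(\la)$ and the same simplification $2^{-\frac{\ell(\la)+\delta(\la)}{2}-1}=2^{-\frac{\ell(\la)}{2}-1}$ when $\delta(\la)=0$. Your remark on applying the proposition degree by degree to the graded module is a sound and correct (if routine) point of care.
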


\subsection{A graded regular $\HC$-module}

It is well known that the algebra $ (S^* V)^{S_n}$ of
$S_n$-invariants in $S^* V$ is a free polynomial algebra
whose Hilbert series $P(t)$ is given by
\begin{align}
\ds P(t)=\frac{1}{(1-t)(1-t^2)\cdots(1-t^n)}.\label{Poincare series}
\end{align}
Define the ring of coinvariants $(S^* V)_{S_n}$ to be the
quotient of $ S^* V$ by the ideal generated by the homogeneous invariant
polynomials of positive degrees.
It is well known that $S^* V$ is a free module over the algebra $(S^* V)^{S_n}$,
 and so we have an isomorphism of graded $\C S_n$-modules 
\begin{align}
 S^* V&\cong (S^* V)_{S_n}
\otimes_{\C} (S^* V)^{S_n}.
\label{coin.tensor inv.}
\end{align}

\begin{thm}\label{Cliff.tensor coin.}
The graded multiplicity of $D^{\la}$ in $\mathcal{C}_n\otimes (S^* V)_{S_n}$ is
\begin{align*}
\ds 2^{-\frac{\ell(\la)+\delta(\la)}{2}}
\frac{t^{n(\la)}(1-t)(1-t^2)\cdots(1-t^n)\prod_{(i,j)\in
\la^*}(1+t^{c_{ij}})}{\prod_{(i,j)\in \la^*}(1-t^{h^*_{ij}})}.
\end{align*}
\end{thm}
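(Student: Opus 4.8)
The plan is to combine Theorem~A with the tensor decomposition \eqref{coin.tensor inv.} by a simple generating-function argument. Applying the Clifford functor, or rather inducing from $\C S_n$ to $\HC$, the isomorphism \eqref{coin.tensor inv.} gives an isomorphism of graded $\HC$-modules
\begin{align*}
\mathcal{C}_n\otimes S^* V \cong \big(\mathcal{C}_n\otimes (S^* V)_{S_n}\big)\otimes_{\C}(S^* V)^{S_n},
\end{align*}
since the $S_n$-invariants $(S^* V)^{S_n}$ sit in the $\C S_n$-trivial slot and simply tensor through the induction. Because $(S^* V)^{S_n}$ is a trivial $S_n$-module in each degree, this tensoring multiplies the graded multiplicity of every $D^\la$ by the Hilbert series $P(t)$ of \eqref{Poincare series}.

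Concretely, let $M_\la(t)$ denote the graded multiplicity of $D^\la$ in $\mathcal{C}_n\otimes(S^* V)_{S_n}$. The displayed isomorphism yields
\begin{align*}
\text{(graded mult.\ of }D^\la\text{ in }\mathcal{C}_n\otimes S^* V\text{)} = M_\la(t)\cdot P(t).
\end{align*}
By Theorem~A the left-hand side equals $2^{-\frac{\ell(\la)+\delta(\la)}{2}}\,\dfrac{t^{n(\la)}\prod_{(i,j)\in\la^*}(1+t^{c_{ij}})}{\prod_{(i,j)\in\la^*}(1-t^{h^*_{ij}})}$, while $P(t)=\dfrac{1}{(1-t)(1-t^2)\cdots(1-t^n)}$ by \eqref{Poincare series}. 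Solving for $M_\la(t)$ gives precisely the asserted formula
\begin{align*}
M_\la(t)=2^{-\frac{\ell(\la)+\delta(\la)}{2}}\,\frac{t^{n(\la)}(1-t)(1-t^2)\cdots(1-t^n)\prod_{(i,j)\in\la^*}(1+t^{c_{ij}})}{\prod_{(i,j)\in\la^*}(1-t^{h^*_{ij}})}.
\end{align*}

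The only point needing a little care — and the step I expect to be the mild obstacle — is justifying that the decomposition \eqref{coin.tensor inv.} really does tensor through the induction functor $\mathcal{C}_n\otimes(-)={\rm ind}^{\HC}_{\C S_n}(-)$ compatibly with the grading, i.e.\ that ${\rm ind}^{\HC}_{\C S_n}\big(N\otimes_\C W\big)\cong \big({\rm ind}^{\HC}_{\C S_n}N\big)\otimes_\C W$ as graded $\HC$-modules when $W$ carries the trivial $S_n$-action in each degree. This is a standard projection-formula type identity (the $\HC$-action on the right only involves the $\C S_n$ and $\mathcal{C}_n$ factors through $N$, and $S_n$ acts trivially on $W$), and it is compatible with the $\Z_+$-grading because the grading on $S^* V$ is the tensor-product grading under \eqref{coin.tensor inv.}. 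Once this is noted, the result is immediate, and one may equally phrase the whole argument at the level of graded characters in $\Gamma_\C\otimes\C[[t]]$ using \eqref{char. map}, which sidesteps any module-theoretic subtlety.
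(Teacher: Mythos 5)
Your proof is correct and uses essentially the same argument as the paper: divide the graded multiplicity from Theorem~A by the Hilbert series $P(t)$ of the invariants, using \eqref{coin.tensor inv.} to justify the factorization. The extra care you take over the projection-formula step and the alternative character-level phrasing are both sound, but the paper treats this as immediate.
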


\begin{proof}
Follows directly from Theorem~A, (\ref{Poincare series}), and
\eqref{coin.tensor inv.}.
\end{proof}

\begin{rem}
Recall the isomorphism of $\HC$-modules $D^{(n)} \cong \Cl$ (cf. \cite{Kle}).
It follows that the graded multiplicity
of $\Cl$ in $\mathcal{C}_n\otimes (S^* V)_{S_n}$ is
$
(1+t)(1+t^2)\cdots(1+t^{n-1}),
$
and  the graded multiplicity
of $\Cl$ in $\mathcal{C}_n\otimes S^*
V$ is
\begin{align}  \label{basic mult}
\frac{(1+t)(1+t^2)\cdots(1+t^{n-1})}{(1-t)(1-t^2)\cdots(1-t^n)}.
\end{align}
\end{rem}

\begin{rem}
The number $g^{\la}$ of standard shifted Young tableaux of shape
$\la$ is known to be (cf. \cite{Sa2, Mac})
\begin{align}
g^{\la}=\frac{n!}{\prod_{(i,j)\in \la^*}h^*_{ij}}.\notag
\end{align}
By the isomorphism of $\C S_n$-modules
$(S^* V)_{S_n} \cong\C S_n$, the $\HC$-module
$\mathcal{C}_n\otimes (S^* V)_{S_n}$ is isomorphic to the regular representation of $\HC$. It is
known (cf. \cite{Kle}) that the multiplicity of $D^{\la}$ in the regular
representation of $\HC$ is given by
$\frac{1}{2^{\delta(\la)}} \ {\rm dim} D^{\la}$. By
specializing $t=1$ in Theorem~\ref{Cliff.tensor coin.},
we recover the dimension formula
$
{\rm dim} D^{\la}=2^{n-\frac{\ell(\la)-\delta(\la)}{2}} g^\la.
$
\end{rem}

\section{The graded multiplicity in $\mathcal{C}_n\otimes S^* V\otimes\wedge^*
V$} \label{sec:bigraded}
\subsection{The $S_n$-module $ S^* V\otimes\wedge^* V$}

The $S_n$-action on $V=\C^n$ induces a natural $S_n$-action on the exterior algebra
$$
\wedge^{*}V
=\bigoplus_{q=0}^n \wedge^{q}V.
$$
This gives rise to a $\Z_+ \times \Z_+$ bi-graded $\C S_n$-module structure on
\begin{align*}
S^*V\otimes\wedge^*V=\bigoplus_{p\geq 0, 0\leq
q\leq n}S^pV\otimes\wedge^qV.
\end{align*}

According to Kirillov and Pak \cite{KP}, the bi-graded multiplicity of the Specht module
$S^{\la}$ for $\la\vdash n$ in $S^*V\otimes
\wedge^*V$ is given by
\begin{align}  \label{eq:KPak}
\sum_{p=0}^\infty \sum_{q=0}^n t^p s^q m_\la (S^pV \otimes \wedge^qV)
= \frac{\prod_{(i,j)\in\la}(t^{i-1}+st^{j-1})}{\prod_{(i,j)\in\la}(1-t^{h_{ij}})},
\end{align}
which can be rewritten as
\begin{align*}
 \frac{t^{n(\la)}\prod_{(i,j)\in\la}(1+st^{c_{ij}})}{\prod_{(i,j)\in\la}(1-t^{h_{ij}})}.
\end{align*}
In particular, this recovers Solomon's formula  \cite{So}
for the generating function for the bi-graded $S_n$-invariants
in $S^*V\otimes
\wedge^*V$:
\begin{align}   \label{Solomon}
 \frac{(1+s)(1+st)\cdots(1+st^{n-1})}{(1-t)(1-t^2)\cdots(1-t^n)}.
\end{align}
The formal similarity between the graded multiplicities \eqref{hook} and \eqref{eq:KPak}
 in very different settings is rather striking.
Also compare the similarity between \eqref{basic mult} and \eqref{Solomon}.
\subsection{Proof of Theorem~C}
\label{subsec:Cliff.Sym.Exter}

\begin{lem}\label{lem:cliff.sgn}
The following holds
as $\HC$-modules:
\begin{align*}
{\rm ind}^{\HC}_{\mathbb{C}S_n}{\rm sgn}\cong {\rm
ind}^{\HC}_{\mathbb{C}S_n}{\bf 1}.
\end{align*}
\end{lem}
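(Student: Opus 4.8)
The plan is to exhibit an explicit $\HC$-module isomorphism between $\mathrm{ind}^{\HC}_{\mathbb{C}S_n}\mathrm{sgn}$ and $\mathrm{ind}^{\HC}_{\mathbb{C}S_n}\mathbf{1}$. Both modules are, as vector spaces, $\mathcal{C}_n\otimes_{\mathbb{C}}\mathbb{C}S_n/(\text{subgroup relations})$, i.e.\ of the same dimension $2^n\cdot 1$ over the group algebra side; concretely $\mathrm{ind}^{\HC}_{\mathbb{C}S_n}\mathbf{1}\cong\mathcal{C}_n$ with $\mathcal{C}_n$ acting by left multiplication and $s_j$ acting via the embedding $s_j\mapsto s_j$ that permutes the Clifford generators (here using $s_j\cdot c_i\cdot s_j=c_{s_j(i)}$ in $\HC$), while $\mathrm{ind}^{\HC}_{\mathbb{C}S_n}\mathrm{sgn}$ is the same space $\mathcal{C}_n$ but with $s_j$ twisted by a sign. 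So the task is to produce an intertwiner that converts that sign twist into something implementable inside $\mathcal{C}_n$.

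First I would fix the standard model: identify $\mathcal{C}_n\cong D^{(n)}$ as an $\HC$-module, so that $\mathrm{ind}^{\HC}_{\mathbb{C}S_n}\mathbf 1\cong\mathcal{C}_n$ with $c_i$ acting by left multiplication and $s_j$ acting by the algebra automorphism of $\mathcal{C}_n$ swapping $c_j\leftrightarrow c_{j+1}$ (this is forced by the relation $s_jc_i=c_{s_j(i)}s_j$ in $\HC$ together with $s_j\cdot 1=1$). Then $\mathrm{ind}^{\HC}_{\mathbb{C}S_n}\mathrm{sgn}$ is the same underlying space and same $c_i$-action, but with $s_j$ acting by $-(\text{swap }c_j\leftrightarrow c_{j+1})$. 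The key observation is that conjugation by the element $\omega:=c_1c_2\cdots c_n\in\mathcal{C}_n$ on $\mathcal{C}_n$ (i.e.\ $x\mapsto \omega x\omega^{-1}$) sends $c_i\mapsto -c_i$ for every $i$ — since $c_i$ anticommutes with each of the other $n-1$ factors of $\omega$ and commutes with itself, picking up $(-1)^{n-1}$ times $(-1)$ from... one needs to track the parity carefully, but up to a uniform sign it implements $c_i\mapsto \pm c_i$. The map $\phi:x\mapsto \omega x$ (left multiplication by $\omega$), or a suitable variant, should intertwine the two $s_j$-actions because left multiplication by a fixed element commutes with the left $\mathcal{C}_n$-action, and the extra sign appearing when one commutes $\omega$ past $s_j$ inside $\HC$ is exactly the sign $\mathrm{sgn}(s_j)=-1$.

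The main obstacle I anticipate is the bookkeeping of signs and parities: one must verify that left multiplication by $\omega=c_1\cdots c_n$ (or by some normalization like $(\sqrt{-1})^{?}\,\omega$) really is an $\HC$-module map from $\mathrm{ind}^{\HC}_{\mathbb{C}S_n}\mathbf 1$ to $\mathrm{ind}^{\HC}_{\mathbb{C}S_n}\mathrm{sgn}$, checking the relation $\phi(s_j\cdot v)=s_j\cdot_{\mathrm{sgn}}\phi(v)$. Since $s_j$ in $\HC$ satisfies $s_j\,c_1\cdots c_n = (\text{$c_j\leftrightarrow c_{j+1}$ applied to }c_1\cdots c_n)\,s_j = -\,c_1\cdots c_n\,s_j$ (a transposition of two adjacent anticommuting generators flips the sign of the product), we get $\phi(s_j\cdot v)=\omega s_j v = -s_j\omega v = -s_j\phi(v) = s_j\cdot_{\mathrm{sgn}}\phi(v)$ in the sign-twisted module, exactly as required; compatibility with the $c_i$-action is automatic. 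Finally $\phi$ is bijective since $\omega$ is invertible in $\mathcal{C}_n$ (as $\omega^2=\pm 1$), completing the proof. As a sanity check, this is consistent with the earlier remark that $\mathcal{C}_n\cong D^{(n)}$ and with Lemma~\ref{lem:induced}, and it parallels the classical fact that $\mathrm{ind}^{S_n}_{S_n}$ of any one-dimensional character gives modules that become isomorphic after the Clifford twist.
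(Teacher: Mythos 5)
Your core idea — use $\omega := c_1c_2\cdots c_n$ as the intertwiner, exploiting $s_j\,\omega\,s_j^{-1}=-\omega$ to absorb the extra $\mathrm{sgn}(s_j)=-1$ — is exactly the paper's idea. But the map you propose, $\phi(x)=\omega x$, is \emph{left} multiplication by $\omega$, and the justification you give for it, ``left multiplication by a fixed element commutes with the left $\mathcal{C}_n$-action,'' is false: left multiplication by $\omega$ commutes with left multiplication by $c_i$ only when $\omega$ is central in $\mathcal{C}_n$, i.e.\ only when $n$ is odd. In general $\omega c_i = (-1)^{n-1}c_i\omega$, so for $n$ even your $\phi$ satisfies $\phi(c_i\cdot x)=\omega c_i x = -c_i\omega x = -c_i\phi(x)$, which is off by a sign and is \emph{not} an $\HC$-module map (and it is not salvaged by parity considerations, since $\omega$ is even when $n$ is even). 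Your final sign check also mixes two incompatible pictures of the induced module: if you view $s_j\cdot v$ as the internal product $s_jv$ in $\HC e_\pm$, there is no extra explicit $-1$ to introduce on the target side — the sign is already encoded in $e_-$; if instead you use the model $\mathrm{ind}^{\HC}_{\mathbb{C}S_n}\mathbf 1\cong\mathcal{C}_n$ with $s_j$ acting by the swap automorphism, then $\phi(s_j\cdot v)$ should read $\omega\,(s_jvs_j^{-1})$, not $\omega s_j v$.

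The fix is to use \emph{right} multiplication by $\omega$, which is what the paper does: $f(c\otimes 1)=c\,\omega\otimes 1$. Right multiplication always commutes with the left $\mathcal{C}_n$-action (no condition on $n$), so $c_i$-equivariance is automatic, and for the $s_j$-action one computes
$s_j\cdot f(c\otimes 1) = (s_jcs_j^{-1})(s_j\omega s_j^{-1})\otimes 1 = -(s_jcs_j^{-1})\,\omega\otimes 1 = f\bigl(-(s_jcs_j^{-1})\otimes 1\bigr) = f(s_j\cdot_{\mathrm{sgn}}(c\otimes 1))$,
exactly the sign you need. Invertibility of $\omega$ then gives the isomorphism. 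So the idea is right, but you have left and right multiplication interchanged, and this is not a cosmetic slip: as written, the argument fails for every even $n$.
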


\begin{proof}
Define a $\C$-linear map
\begin{align*}
f: {\rm ind}^{\HC}_{\mathbb{C}S_n}{\rm sgn}=\Cl\otimes {\rm sgn}&\rightarrow
{\rm ind}^{\HC}_{\mathbb{C}S_n}{\bf 1}\notag\\
c\otimes 1&\mapsto c\cdot(c_1c_2\cdots c_n \otimes 1)
\end{align*}
It is straightforward to show that $f$ is actually a $\HC$-module isomorphism.
\end{proof}

\begin{lem} \label{Cliff.Epq}
For $p\geq 0, 0\leq q\leq n$, as $\HC$-modules, we have
\begin{align*}
\Cl \otimes S^{p}V\otimes\wedge^qV\cong
\bigoplus_{\alpha,\beta}{\rm ind}^{\HC}_{\C
(S_{\alpha}\times S_{\beta})}\mathbf{1},
\end{align*}
summed over all
$\alpha\models n-q, \beta\models q$
with $n(\alpha)+n(\beta)=p$.
\end{lem}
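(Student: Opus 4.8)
The plan is to decompose the bi-graded $\C S_n$-module $S^pV \otimes \wedge^q V$ into a sum of induced modules from Young subgroups, and then apply induction in stages together with the tensor identity $\Cl \otimes \operatorname{ind}^{\C S_n}_{\C S_\gamma}(-) \cong \operatorname{ind}^{\HC}_{\C S_\gamma}(-)$ that was used in Lemma~\ref{lem:induced}. First I would recall, as in the proof of Lemma~\ref{lem:induced}, that $S^pV \cong \bigoplus_{\alpha \models n,\, n(\alpha)=p} \operatorname{ind}^{\C S_n}_{\C S_\alpha}{\bf 1}$ as $\C S_n$-modules. Second, I would establish the analogous decomposition for the exterior algebra: a basis of $\wedge^q V$ is given by $\{x_{i_1}\wedge\cdots\wedge x_{i_q} : i_1<\cdots<i_q\}$, and $S_n$ permutes these basis vectors transitively on $q$-subsets, with the stabilizer of $\{1,\dots,q\}$ being $S_q \times S_{n-q}$ acting on the $\wedge^q$-factor by the sign character of the $S_q$-part; hence $\wedge^q V \cong \operatorname{ind}^{\C S_n}_{\C(S_q \times S_{n-q})}(\operatorname{sgn}_{S_q} \boxtimes {\bf 1}_{S_{n-q}})$.

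Third, I would combine these: using the $S_n$-module isomorphism $\wedge^q V \cong \operatorname{ind}^{\C S_n}_{\C(S_q\times S_{n-q})}(\operatorname{sgn}\boxtimes{\bf 1})$ and then restricting $S^pV$ appropriately, I get that $S^pV \otimes \wedge^q V$ is a sum over the relevant data of modules induced from parabolic subgroups. More precisely, by the tensor identity $M \otimes \operatorname{ind}^{\C S_n}_{\C H}N \cong \operatorname{ind}^{\C S_n}_{\C H}(\operatorname{res} M \otimes N)$ and the fact that $S^pV$ restricted to $S_q \times S_{n-q}$ decomposes as $\bigoplus S^aV_q \otimes S^bV_{n-q}$ over $a+b=p$, followed by the further decomposition of each symmetric power into induced modules from compositions $\beta \models q$ with $n(\beta)=a$ and $\alpha\models n-q$ with $n(\alpha)=b$, transitivity of induction gives $S^pV\otimes\wedge^qV \cong \bigoplus \operatorname{ind}^{\C S_n}_{\C(S_\beta\times S_\alpha)}(\operatorname{sgn}\boxtimes{\bf 1})$ summed over $\alpha\models n-q$, $\beta\models q$ with $n(\alpha)+n(\beta)=p$. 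Fourth, I would induce up to $\HC$: applying $\Cl \otimes -$ and using $\Cl\otimes\operatorname{ind}^{\C S_n}_{\C S_n}(-) = \operatorname{ind}^{\HC}_{\C S_n}(-)$ together with transitivity, $\Cl\otimes S^pV\otimes\wedge^qV \cong \bigoplus \operatorname{ind}^{\HC}_{\C(S_\beta\times S_\alpha)}(\operatorname{sgn}_{S_q}\boxtimes{\bf 1})$. Finally, Lemma~\ref{lem:cliff.sgn} (in the form $\operatorname{ind}^{\HC}_{\C S_q}\operatorname{sgn}\cong\operatorname{ind}^{\HC}_{\C S_q}{\bf 1}$, applied inside the induction from the parabolic) lets me replace the sign character on the $S_q$-part by the trivial one, yielding $\bigoplus \operatorname{ind}^{\HC}_{\C(S_\alpha\times S_\beta)}{\bf 1}$ as claimed.

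The main obstacle I anticipate is bookkeeping the bi-grading correctly through the chain of isomorphisms — in particular verifying that the Koszul-type degree $p$ really does split as $n(\alpha)+n(\beta)$ when one first peels off the $\wedge^q$ factor (contributing the parabolic $S_q\times S_{n-q}$) and then resolves the $S^pV$ factor into monomials, which requires choosing orbit representatives of monomials compatible with the block decomposition $\{1,\dots,q\}\sqcup\{q+1,\dots,n\}$. One must be slightly careful that after fixing the $\wedge^q$-orbit representative, the residual $S^pV$ lives as a module over $S_q\times S_{n-q}$ and its monomial orbits are indexed by pairs of compositions, with $n(-)$ additive across the two blocks; this is a direct extension of the argument in Lemma~\ref{lem:induced} but needs to be spelled out. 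A secondary subtlety is that Lemma~\ref{lem:cliff.sgn} is stated as an isomorphism of $\HC$-modules for the full symmetric group $S_n$, so to apply it to the $S_q$-factor sitting inside $\C(S_\alpha\times S_\beta) \subseteq \C(S_q\times S_{n-q}) \subseteq \C S_n$ one should note its proof (twisting by $c_1\cdots c_q$) works verbatim at the level of $\mathcal C_q \rtimes \C S_q$ and then induce up; alternatively one can absorb the sign twist at the very end after inducing all the way to $\HC$. Everything else is routine Mackey-theoretic manipulation.
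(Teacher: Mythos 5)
Your proof is correct and follows essentially the same strategy as the paper: decompose $S^pV\otimes\wedge^qV$ as a $\C S_n$-module into parabolic inductions of ${\bf 1}\otimes\operatorname{sgn}$ from $S_\alpha\times S_\beta$, induce up to $\HC$, and then replace the sign twist by the trivial character via (the Young-subgroup extension of) Lemma~\ref{lem:cliff.sgn}. The paper reaches the intermediate $\C S_n$-decomposition by a direct orbit count on a monomial-times-wedge basis (``arguing similarly as in the proof of Theorem~A''), whereas you arrive at it through the projection formula and restriction of $S^pV$ to $S_q\times S_{n-q}$; and your remark that Lemma~\ref{lem:cliff.sgn}, as stated for $S_n$, needs to be applied after replacing $c_1\cdots c_n$ by the product of $c_i$ over the $\beta$-block is a legitimate point that the paper leaves implicit.
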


\begin{proof}
Arguing similarly as in the proof of Theorem~A,
we have an isomorphism of $\mathbb{C}S_n$-modules:
\begin{align*}
S^pV\otimes\wedge^qV
\cong\bigoplus_{\alpha,\beta}{\rm ind}^{\C S_n}_{\C
(S_{\alpha}\times S_{\beta})} (\mathbf{1}\otimes{\rm sgn}),
\end{align*}
summed over all
$\alpha\models n-q, \beta\models q$
with $n(\alpha)+n(\beta)=p$.
Now the lemma follows by applying $\text{ind}_{\C S_n}^{\HC}$ to the above isomorphism and
using Lemma~\ref{lem:cliff.sgn}.
\end{proof}

We are ready to prove Theorem C from the Introduction.

\begin{proof}[Proof of Theorem~C]
It follows by~(\ref{char. map}) and Lemma~\ref{Cliff.Epq} that
\begin{align*}
{\rm ch}(\Cl \otimes  S^{p}V\otimes\wedge^qV)
=\sum_{\alpha,\beta}q_{\alpha}(z)q_{\beta}(z),
\end{align*}
summed over all
$\alpha=(\alpha_1, \alpha_2,\ldots) \models n-q,
\beta  =(\beta_1,\beta_2,\ldots) \models q$
with $n(\alpha)+n(\beta)=p$.
Hence,
\begin{align*}
\sum_{p\geq 0,0\leq q\leq n} & t^ps^q{\rm ch}(\mathcal{C}_n\otimes
S^{p}V\otimes\wedge^qV)  \label{eqn:Cliff.sym exterior} \\
&=\sum_{p\geq 0,0\leq q\leq n}t^ps^q
\sum_{\alpha\models n-q,\beta\models q, n(\alpha)+n(\beta)=p}q_{\alpha}(z)q_{\beta}(z)\notag\\
&=[u^n]\sum_{\alpha_1, \alpha_2,\ldots,\beta_1,\beta_2,\ldots}
\prod _{r\geq 0}q_{\alpha_{r+1}}(z)(t^{r}u)^{\alpha_{r+1}}
\prod _{k\geq 0}q_{\beta_{k+1}}(z)(t^{k}su)^{\beta_{k+1}}\notag \\
&=[u^n] \prod_{i\geq 1,r\geq 0} \frac{1+z_it^ru}{1-z_it^ru}
 \frac{1+z_it^rsu}{1-z_it^rsu}  \notag\\
&=\sum_{\lambda\vdash_s n}2^{-\ell(\la)}
Q_{\la}(t^{\bullet};st^{\bullet})Q_{\la}(z) \nonumber
\end{align*}
where we have used the short-hand notation $Q_{\la}(t^{\bullet};st^{\bullet})$
from Introduction and \eqref{Cauchy identity} in the last equation.
Theorem~C follows.
\end{proof}

\begin{rem}
It is an interesting open problem to find an explicit formula for
$Q_{\lambda}(t^{\bullet};st^{\bullet})$. Consider a
Koszul $\Z_+$-grading which counts the standard generators of
$S^*V$ as degree $2$ and the standard generators of
$\wedge^*V$ as degree $1$. This corresponds precise
to setting $t=s^2$, and hence,
$Q_{\lambda}(t^{\bullet};st^{\bullet})=Q_{\lambda}(s^{\bullet})$.
Therefore, for the Koszul grading, the graded multiplicity of
$D^\la$ in $\mathcal{C}_n\otimes S^*V\otimes
\wedge^*V$ is given by the same formula \eqref{eq:bi mult}.
\end{rem}
\subsection{Some consequences of Theorem~C}

Recall the notation of $Q_{\la}(t^{\bullet};st^{\bullet})$ from Introduction.
By Proposition~\ref{prop:mult.equiv},
we have the following corollary as a counterpart of Theorem~C for $\spin$.
\begin{cor}
The bi-graded multiplicity of $D^\la_-$ in the $\spin$-module
$\mathcal{B}_n\otimes S^*V\otimes\wedge^*V$
is given by \eqref{eq:bi mult} unless $n$ is odd and $\ell(\la)$ is even;
in this case, the bi-graded multiplicity is
$$
2^{-\frac{\ell(\lambda)}{2}-1}Q_{\la}(t^{\bullet};st^{\bullet}).
$$
\end{cor}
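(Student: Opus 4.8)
The plan is to deduce this corollary directly from Theorem~C together with Proposition~\ref{prop:mult.equiv}, applied in each fixed bi-degree. For $p\geq 0$ and $0\leq q\leq n$, the space $M=S^pV\otimes\wedge^qV$ is an ordinary $\C S_n$-module, so Proposition~\ref{prop:mult.equiv} applies with $M$ in this role: writing $m_\la$ for the multiplicity of $D^\la$ in $\Cl\otimes M$ and $m_\la^-$ for the multiplicity of $D^\la_-$ in $\mathcal{B}_n\otimes M$, one has $m_\la^-=m_\la$ unless $n$ is odd and $\ell(\la)$ is even, in which case $m_\la^-=\tfrac12 m_\la$. Since the bi-grading on $\mathcal{B}_n\otimes S^*V\otimes\wedge^*V$ is inherited from $S^*V\otimes\wedge^*V$ (with $\mathcal{B}_n$ concentrated in bi-degree $(0,0)$), summing these identities against $t^p s^q$ is legitimate and reduces the corollary to a bookkeeping of powers of $2$.

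First I would handle the generic case. When $n$ is even, or $n$ is odd and $\ell(\la)$ is odd, the two bi-graded generating functions $\sum_{p,q}t^ps^q m_\la^-$ and $\sum_{p,q}t^ps^q m_\la$ agree term by term, so the bi-graded multiplicity of $D^\la_-$ equals that of $D^\la$, which by Theorem~C is $2^{-\frac{\ell(\la)+\delta(\la)}{2}}Q_\la(t^\bullet;st^\bullet)$, i.e.\ \eqref{eq:bi mult}. In the remaining case ($n$ odd, $\ell(\la)$ even), the bi-graded multiplicity of $D^\la_-$ is exactly half of the expression furnished by Theorem~C; since $\ell(\la)$ is even we have $\delta(\la)=0$, whence $\tfrac12\cdot 2^{-\frac{\ell(\la)+\delta(\la)}{2}}=\tfrac12\cdot 2^{-\ell(\la)/2}=2^{-\ell(\la)/2-1}$, which is the stated formula.

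I do not anticipate a genuine obstacle here: the content of the corollary is entirely contained in Theorem~C and Proposition~\ref{prop:mult.equiv}, and the only point requiring (minor) care is that Proposition~\ref{prop:mult.equiv}, whose proof rests on the tensor identity of Lemma~\ref{lem:equiv.tensor} and the description of $\mathcal{B}_n$ in \eqref{basic spin}, may be invoked separately in each bi-degree --- equivalently, that the functor $\mathfrak{G}_n$ commutes with the $S_n$-action used in \eqref{eqn:HC.tensor sym} to form $\Cl\otimes S^pV\otimes\wedge^qV$, so that it preserves the $\Z_+\times\Z_+$-grading.
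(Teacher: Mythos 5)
Your proof is correct and follows the same route as the paper: the corollary is read off from Theorem~C by applying Proposition~\ref{prop:mult.equiv} in each bi-degree $(p,q)$ with $M=S^pV\otimes\wedge^qV$, and the factor $\tfrac12$ in the exceptional case combines with $\delta(\la)=0$ to give $2^{-\ell(\la)/2-1}$. Your remark that $\mathfrak G_n$ respects the bi-grading is a small but worthwhile justification for passing from the degreewise identities to the generating functions.
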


\begin{cor}   \label{mult wedge}
The graded multiplicity of $D^{\la}$ in the $\HC$-module
${\rm ind}^{\HC}_{\C
S_n}(\wedge^*V)$ is given by $ \ds
2^{-\frac{\ell(\lambda)+\delta(\lambda)}{2}} Q_{\lambda}(1,s). $
Moreover,
\begin{align}
\ds Q_{\lambda}(1,s)= \left \{
 \begin{array}{ll}
\ds \frac{2^{\ell(\la)}(1+s)(s^l-s^k)}{1-s},
   & \text{ if }\lambda=(k,l) \text{ with }k>l\geq 0,\\
0, \text{ otherwise.}
 \end{array}\notag
 \right.
\end{align}
\end{cor}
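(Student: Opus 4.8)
The plan is to prove Corollary~\ref{mult wedge} by specializing Theorem~C at $t=0$. First I would observe that setting $t=0$ in the bi-graded generating function
$$
\sum_{p\geq 0,0\leq q\leq n} t^ps^q m_\la(\mathcal{C}_n\otimes S^pV\otimes\wedge^qV)
= 2^{-\frac{\ell(\la)+\delta(\la)}{2}} Q_\la(t^\bullet;st^\bullet)
$$
kills all contributions from $S^pV$ with $p>0$, leaving exactly $\sum_{q=0}^n s^q m_\la(\mathcal{C}_n\otimes\wedge^qV)$ on the left, i.e. the graded multiplicity of $D^\la$ in ${\rm ind}^{\HC}_{\C S_n}(\wedge^*V) = \mathcal{C}_n\otimes\wedge^*V$. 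On the right, $Q_\la(t^\bullet;st^\bullet)=Q_\la(1,t,t^2,\ldots;s,st,st^2,\ldots)$ specializes at $t=0$ to $Q_\la(1,0,0,\ldots;s,0,0,\ldots)=Q_\la(1,s)$, since all higher variables vanish and only the two leading variables $1$ and $s$ survive. This immediately gives the first assertion $m_\la = 2^{-\frac{\ell(\la)+\delta(\la)}{2}}Q_\la(1,s)$.

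Next I would evaluate $Q_\la(1,s)$ explicitly using the determinantal/Pfaffian structure of Schur $Q$-functions in two variables. The cleanest route is the classical fact that $Q_\la(z_1,z_2)=0$ whenever $\ell(\la)>2$ (a two-variable Schur $Q$-function vanishes on strict partitions of length exceeding $2$), which handles the ``otherwise'' case. For $\ell(\la)\leq 2$, write $\la=(k,l)$ with $k>l\geq 0$ (allowing $l=0$ for the one-row case). One can use the generating-function identity \eqref{gen.fun.qr}, $\sum_{r\geq 0}q_r u^r = \prod_i\frac{1+z_iu}{1-z_iu}$, which in two variables $z=(1,s)$ becomes $\frac{(1+u)(1+su)}{(1-u)(1-su)}$, to read off $q_r(1,s)$; then apply the standard expansion $Q_{(k,l)} = q_kq_l + 2\sum_{i\geq 1}(-1)^i q_{k+i}q_{l-i}$ (and $Q_{(k)}=q_k$), telescoping the alternating sum. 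Alternatively, and perhaps more transparently, I would use the marked-shifted-tableau interpretation \eqref{SchurQ}: $Q_\la(1,s)$ counts marked shifted $\la$-tableaux filled from the two-letter alphabet $\{1'<1<2'<2\}$, weighted by $s^{\#\{\text{entries with }|{\cdot}|=2\}}$, and for a two-row shifted shape this count is an elementary combinatorial sum in $k,l$ that evaluates to $\frac{2^{\ell(\la)}(1+s)(s^l-s^k)}{1-s}$.

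The main obstacle is getting the closed form $\frac{2^{\ell(\la)}(1+s)(s^l-s^k)}{1-s}$ correct, including the power of $2$ and the sign/ordering in the numerator $s^l-s^k$ (note this is negative as a polynomial but the whole expression $\frac{s^l-s^k}{1-s}=-(s^l+s^{l+1}+\cdots+s^{k-1})$... so actually the stated formula has a sign that needs care — one should double-check whether it should read $s^k - s^l$ or whether the factor conventions make $\frac{s^l-s^k}{1-s}$ a genuine polynomial with the right sign; since $k>l$, $\frac{s^k-s^l}{1-s} = -(s^l+\cdots+s^{k-1})$ is negative, while $\frac{s^l-s^k}{1-s}=s^l+\cdots+s^{k-1}$ is the positive polynomial, so the formula as written is the sensible one). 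Verifying the $\ell(\la)=1$ boundary case ($l=0$, giving $Q_{(k)}(1,s)=q_k(1,s)=2(1+s)(1+\cdots+s^{k-1})\cdot$const, which should match $\frac{2(1+s)(1-s^k)}{1-s}$) is a useful sanity check I would include, and confirming $\ell(\la)>2\Rightarrow Q_\la(1,s)=0$ from whichever expansion I adopt. Everything else — the $t=0$ specialization and the identification of the left-hand side with the multiplicity in ${\rm ind}^{\HC}_{\C S_n}(\wedge^*V)$ — is routine given Theorem~C.
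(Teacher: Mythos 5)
Your proof of the first assertion---setting $t=0$ in Theorem~C to kill the $S^pV$ contributions and reduce $Q_\la(t^\bullet;st^\bullet)$ to $Q_\la(1,s)$---is exactly the paper's argument, and you handle the sign discussion correctly ($\frac{s^l-s^k}{1-s}=s^l+\cdots+s^{k-1}$ is the positive polynomial since $k>l$). For the evaluation of $Q_\la(1,s)$ you diverge from the paper: the paper simply substitutes $m=2$ directly into the symmetrization formula \eqref{SchurQ1}, which for $\la=(k,l)$ is a two-term sum over $S_2$ immediately yielding
$$Q_\la(z_1,z_2)=\frac{2^{\ell(\la)}(z_1+z_2)(z_1^kz_2^l-z_1^lz_2^k)}{z_1-z_2},$$
and which visibly vanishes for $\ell(\la)>2$ (the definition requires $m\geq\ell$); setting $z_1=1,z_2=s$ finishes it in one line. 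Your two alternatives---the $q$-expansion $Q_{(k,l)}=q_kq_l+2\sum_{i\geq1}(-1)^iq_{k+i}q_{l-i}$ with $q_r(1,s)=\tfrac{2(1+s)(1-s^r)}{1-s}$ read off from \eqref{gen.fun.qr}, or the marked-shifted-tableau count from \eqref{SchurQ}---are both valid routes. The $q$-expansion works (I checked $\la=(2,1)$: both sides give $4s(1+s)$) but the ``telescoping'' is less clean than you suggest since $(1-s^{k+i})(1-s^{l-i})$ does not cancel termwise; it requires an honest summation. The vanishing for $\ell(\la)>2$ is something you have to cite separately (``a classical fact''), whereas in the paper's route it is automatic from the two-variable specialization of \eqref{SchurQ1}. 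In short, your plan is sound and would reach the stated formula, but the paper's direct use of \eqref{SchurQ1} is the more economical route for this particular two-variable evaluation.
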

\begin{proof}
The first statement is obtained by setting $t=0$ in Theorem~C.
By~(\ref{SchurQ1}), we see that
\begin{align}
\ds Q_{\lambda}(z_1,z_2)= \left \{
 \begin{array}{ll}
\ds \frac{2^{\ell(\la)}(z_1+z_2)(z_1^kz_2^l-z_1^lz_2^k)}{z_1-z_2},
   & \text{ if }\lambda=(k,l) \text{ with }k>l\geq 0,\\
0, \text{ otherwise.}
 \end{array}\notag
 \right.
\end{align}
The Corollary follows by setting $z_1=1$ and $z_2=s$.
\end{proof}

Setting $s=0$ in Theorem~C,
we recover  Theorem~A.

\end{document}